\documentclass[12pt,a4paper,reqno]{amsart}
\usepackage{amssymb}
\usepackage{amscd}
\usepackage{enumerate}
\usepackage{graphicx}
\usepackage{siunitx}
\usepackage{tikz-cd}
\usepackage{color}
\usetikzlibrary{arrows}
\numberwithin{equation}{section}

\usepackage{mathabx}

\usepackage{mathtools}
\usepackage[tableposition=top]{caption}
\usepackage{booktabs,dcolumn}





\DeclareFontFamily{OT1}{rsfs}{}
\DeclareFontShape{OT1}{rsfs}{n}{it}{<-> rsfs10}{}
\DeclareMathAlphabet{\mathscr}{OT1}{rsfs}{n}{it}

\addtolength{\textwidth}{3 truecm}
\addtolength{\textheight}{1 truecm}
\setlength{\voffset}{-.6 truecm}
\setlength{\hoffset}{-1.3 truecm}
     
\theoremstyle{plain}

\newtheorem{theorem}{Theorem}[section]
\newtheorem{proposition}[theorem]{Proposition}
\newtheorem{lemma}[theorem]{Lemma}

\theoremstyle{definition}

\newtheorem{definition}[theorem]{Definition}
\newtheorem{remark}[theorem]{Remark}

\parindent 0mm
\parskip   5mm 


\usepackage{algorithm, algorithmic}

\begin{document}

\title[Multiplicity of strong solutions]{Multiplicity of strong solutions for a class of elliptic problems
without the Ambrosetti-Rabinowitz condition in $\mathbb{R}^{N}$$^\star$}

\author{Li Yin}
\address{College of Information and Management Science, Henan Agricultural University,
Zhengzhou, Henan 450002, China}
\email{mathsr@163.com (L. Yin)}

\author{Jinghua Yao$^\dagger$}
\address{$^\dagger$ \textbf{Corresponding author}, Department of Mathematics, Indiana University, Bloomington, IN, 47408, USA.}
\email{yaoj@indiana.edu (J. Yao)}

\author{Qihu Zhang}
\address{Department of Mathematics and Information Science, Zhengzhou
University of Light Industry, Zhengzhou, Henan, 450002, China.}
\email{zhangqihu@yahoo.com (Q. Zhang)}

\author{Chunshan Zhao}
\address{Department of Mathematical Sciences, Georgia Southern University, Statesboro, GA 30460, USA.}
\email{czhao@GeorgiaSouthern.edu (C. Zhao)}
\date{\today}

\thanks{$\dagger$ Corresponding author}
\thanks{$^\star$This research is partly supported by
the key projects in Science and Technology Research of the Henan Education
Department (14A110011).}

\subjclass[2010]{35J20; 35J25;
35J60}

\begin{abstract}
We investigate the existence and multiplicity of
solutions to the following $p(x)$-Laplacian problem in $\mathbb{R}^{N}$ via critical point theory
\begin{equation*}
\left\{
\begin{array}{l}
-\bigtriangleup _{p(x)}u+V(x)\left\vert u\right\vert ^{p(x)-2}u=f(x,u),\text{
in }
\mathbb{R}^{N}, \\
u\in W^{1,p(\cdot )}(\mathbb{R}^{N}).
\end{array}
\right.
\end{equation*}
We propose a new set of growth conditions which matches the variable exponent nature of the problem. Under this new set of assumptions, we manage to verify
the Cerami compactness condition. Therefore, we succeed in proving the existence of multiple
solutions to the above problem without the
well-known Ambrosetti--Rabinowitz type growth condition. Meanwhile, we could also characterize the pointwise asymptotic behaviors of these solutions. In our main argument, the idea of localization, decomposition of the domain, regularity of weak solutions and comparison principle are crucial ingredients among others.
\end{abstract}

\keywords{$p(x)$-Laplacian, Dirichlet problem, Cerami Condition, Variable exponent space, Critical point, Regularity.}

\maketitle

\section{ Introduction}

In this paper, we study the existence and multiplicity of strong
solutions to a class of variable exponent problems in the whole space $\mathbb{R}^{N}$ as follows
\begin{equation*}
\tag{$P$}\left\{
\begin{array}{l}
-\bigtriangleup _{p(x)}u+V(x)\left\vert u\right\vert ^{p(x)-2}u=f(x,u),\text{
in }
\mathbb{R}^{N}, \\
u\in W^{1,p(\cdot )}(\mathbb{R}^{N}),
\end{array}
\right.
\end{equation*}
where $\Delta_{p(x)}$ is the $p(x)$-Laplace operator, also called $p(x)$-Laplacian, and its action on the unknown $u$ is defined by
$\Delta_{p(x)}u=\mbox{div}(|\nabla
u|^{p(x)-2}\nabla u)$, $V(x)$ is a nonlinear potential function, and $f=f(x, u)$ is the nonlinear term.

Before moving forward, let us first briefly recall some background related to the problem above. The study of differential equations and the related variational problems with variable
exponent has been a new and interesting topic. From an application point of view, this study has its backgrounds in such hot topics as
image processing, nonlinear electrorheological fluids and elastic mechanics. We refer the readers to \cite{e1}, \cite{e2}, \cite{e3}, \cite{wy}, \cite{e4} and the
references therein for more details. For a state of the art summary of the applications, see the monographs \cite{j5} and \cite{rr}. From a pure mathematical point of view, the study of differential equations involving the $p(x)$-Laplacian can be regarded as a natural further development of that involving $p$-Laplacian where $p\in (1, +\infty)$ is a constant. Compared with the classical Laplacian $\Delta$ ($p(\cdot)\equiv 2$) and the general $p$-Laplacian $\Delta_p$, the $p(x)$-Laplacian is both nonlinear and non-homogeneous. Therefore, the study of differential equations involving $p(x)$-Laplacian is usually more involved due to the nonlinear and non-homogeneous nature of  $p(x)$-Laplacian. 
Usually new methods and techniques are needed to study such elliptic
equations involving the non-standard growth, since the commonly known methods
and techniques to study elliptic equations involving standard growth may fail.
In our current investigation, we focus on a class of elliptic equations in the whole space $\mathbb R^N$ with the operator $-\Delta_{p(x)}\cdot +V(x)|\cdot|^{p(x)-2}\cdot$ in which a potential term $V(x)$ is also involved. Here we adopt the notion \textit{potential term} for $V(x)$ from the classical Schr$\ddot{o}$dinger operator $\mathcal H:=-\frac{\hbar^2}{2m}\Delta +V(x)$ where $\hbar$ is the Planck constant, $m$ mass of the particle, and $V(x)$ the potential (see \cite{sa}, \cite{wyl}). Later, we will see that this point of view also motivated our assumptions on $V(x)$ for our problem here. During the study of variational problems with variable exponent growth, the regularity properties of solutions to the related differential equations are also involved when one attempts to study finer properties of these solutions, see in particular \textit{Step 2} of the proof of Theorem 1.1 in the current study. There are a large number of 
related research works. We refer the readers to \cite{j8}, \cite{e11}, \cite{e12}, \cite{e14}, \cite{e16}, \cite{e17}, \cite{j1}, \cite{zj2},  \cite{yw}, \cite{e30}, \cite{e32} and the references therein for further details.

In this paper, our main goals are to study the existence and multiplicity of solutions
to the problem $(P)$ in the whole space setting $\mathbb R^N$ (unbounded in particular) without the
classical Ambrosetti-Rabinowitz condition, and to describe the pointwise asymptotic behavior of these solutions. For these purposes, we propose a new set of growth conditions under which we
are able to check the Cerami compactness condition. A main motivation of our current study is the observation that the famous Ambrosetti-Rabinowitz type condition sometimes can be very restrictive and
excludes many interesting nonlinearities. Actually, in the constant exponent case $p(\cdot
)\equiv p$, there were a series of papers dealing with problems without the
Ambrosetti-Rabinowitz type growth condition
(see \cite{zj4}, \cite{zj5}, \cite{jj5}, \cite{jj3},
\cite{jj2}, \cite{jj4}). However, due to the differences between the $p$-Laplacian and $p(x)$-Laplacian
mentioned above, often, it is very difficult to judge whether or
not results about $p$-Laplacian can be generalized to $p(x)$-Laplacian, and
even if the generalization can be done, it is still challenging to figure out the suitable or right forms in which
the results should be. To the best of our knowledge, there were rare results on variable exponent problem
without Ambrosetti-Rabinowitz type growth condition (see \cite{jj1}, \cite
{zj1}, \cite{31a}, \cite{zj3}). In an interesting earlier effort \cite{31a}, the author considered the
existence of solutions of variable exponent differential equations on a bounded domain, intending to weaken
the Ambrosetti-Rabinowitz growth condition. Unfortunately, the related assumption in \cite{31a} was actually stronger than the Ambrosetti-Rabinowitz condition. In a recent advancement \cite{jj1}, the authors considered
the similar problem as $(P)$ under a couple of interesting assumptions as follows: (1$^{0}$) There
exists a constant $\theta \geq 1$, such that $\theta \mathcal{F}(x,t)\geq
\mathcal{F}(x,st)$ for any$\ (x,t)\in \mathbb{R}^{N}\mathbb{\times R}$ and $
s\in \lbrack 0,1]$, where $\mathcal{F}(x,t)=f(x,t)t-p^{+}F(x,t)$; (2$^{0}$) $
f\in C(\mathbb{R}^{N}\mathbb{\times R},\mathbb{R})$ satisfies
$
\text{ }\underset{\left\vert t\right\vert \rightarrow \infty }{\lim }\frac{
F(x,t)}{\left\vert t\right\vert ^{p^{+}}}=\infty.$
Very recently in \cite{zj1}, the authors considered the problem $(P)$ in a bounded domain
under a condition similar to $(2^{0})$. Motivated by the above-mentioned interesting studies and \cite{zj3}, here we
study problem $(P)$ in the whole space $\mathbb R^N$ under growth conditions involving
variable growth rates which match much better the variable nature of the problem under
investigation.

\textbf{Notations.} In this paper, the notion ``$f_{1}(x)<<f_{2}(x)$" or equivalently ``$f_1<<f_2$" means that $\underset{x\in
\mathbb{R}
^{N}}{ess\inf }\,\big(f_{2}(x)-f_{1}(x)\big)>0$. We define the Sobolev conjugate exponent $p^{\ast }(x)$ of the variable exponent $p(x)$ as follows
\begin{equation*}
p^{\ast }(x)=\left\{
\begin{array}{l}
\frac{Np(x)}{N-p(x)}\text{, }p(x)<N, \\
\infty \qquad\text{, }p(x)\geq N.
\end{array}
\right.
\end{equation*}
Throughout the paper, we use letters $c,c_{i},C,C_{i}$, $i=1,2,...$ to
denote generic positive constants which may vary from line to line, and we
will specify them whenever it is necessary. For a function, say $p=p(x)$ or $f=f(x, t)$, we do not
distinguish the expressions $p$, $p(\cdot)$ and $p(x)$ or $f$, $f(\cdot, \cdot)$ and $f(x, t)$ when no ambiguity arises.
For sequences, we shall use ``$\rightharpoonup$" to represent weak convergence while ``$\rightarrow$" strong convergence in suitable function spaces.

To state our main results clearly and make the exposition concrete, we first collect our assumptions on the
potential function $V(x)$, the variable exponent $p(x)$ and the nonlinearity $f$ as follows:  

$(V)$ $V\in L_{loc}^{\infty }(
\mathbb{R}
^{N})$, $\underset{x\in\mathbb{R}^{N}}{ess\inf }\, V(x)=V_{0}>0$ and $V(x)\rightarrow +\infty $ as $\left\vert
x\right\vert \rightarrow \infty $.

$(p)$ $1<<p(\cdot )\in C^{1}(\mathbb{R}^{N})$ and $\left\vert \nabla p\right\vert\in L^{\infty}(\mathbb{R}^N)$.

$(H_{0})$ $f:\mathbb{R}^N\times \mathbb{R}\rightarrow \mathbb{R}$ satisfies the Caratheodory
condition and
\begin{equation*}
\left\vert f(x,t)\right\vert \leq C(\left\vert t\right\vert
^{p(x)-1}+\left\vert t\right\vert ^{\alpha (x)-1}),\forall (x,t)\in
\mathbb{R}^N\times \mathbb{R},
\end{equation*}
where $\alpha \in C(
\mathbb{R}^N)$ and $p(x)<<\alpha (x)<<p^{\ast }(x)$.

$(H_{1})$ There exist constants $M,C_{1},C_{2}>0$ and a function $a>>p$ on $\mathbb{R}^N$ such that
\begin{equation*}
C_{1}\left\vert t\right\vert ^{p(x)}[\ln (e+\left\vert t\right\vert
)]^{a(x)-1}\leq C_{2}\frac{tf(x,t)}{\ln (e+\left\vert t\right\vert )}\leq
tf(x,t)-p(x)F(x,t),\forall \left\vert t\right\vert \geq M,\forall x\in
\mathbb{R}^{N},
\end{equation*}
where and throughout the paper, $F(x, t):=\int_0^{t}f(x,s)\,ds$.

$(H_{2})$ $f(x,t)=o(\left\vert t\right\vert ^{p(x)-1})$ uniformly for $x\in
\mathbb R^N $, as $t\rightarrow 0$.

$(H_{3})$ $f(x,-t)=-f(x,t),$ $\forall x\in \mathbb R^N$, $\forall
t\in \mathbb{R}$.

Before stating our main results and proceeding further, we would like to briefly comment on some of the above assumptions. First, we point out that the assumption $(V)$ means that the potential
$V(x)$ is the standard potential well of infinite depth in classical quantum theory. From a mathematical point view, this natural potential well assumption yields exactly the needed compactness in our problem in the unbounded setting, see in particular Proposition \eqref{prop2.5}-$ii)$, Lemma \ref{lemma3.3}, and Lemma \ref{lemma3.4}. The assumption $(H_0)$ states that the nonlinearity $f$ here has a subcritical (in the sense of Sobolev embedding) growth rate in the whole space $\mathbb{R}^N\times \mathbb{R}$ while the assumptions $(H_1)$ and $(H_2)$ describe the far and near field behaviors of the nonlinearity $f$ with respect to its second argument respectively. It is worthing noting that the rates involved in the assumptions $(H_0)-(H_2)$ are all variable rates which match the variable nature of our problem under study.

Now we are in a position to state our main results.

\begin{theorem}
Suppose that the conditions (V), (p), (H$_{0}$)-(H$_{2}$%
) hold, then the problem (P) has two nontrivial strong solutions $u_{1}$ and
$u_{2}$ which satisfy $u_{1}>0>u_{2}$, $\underset{\left\vert x\right\vert
\rightarrow \infty }{\lim }u_{i}(x)=0$ and $\underset{\left\vert
x\right\vert \rightarrow \infty }{\lim }\left\vert \nabla
u_{i}(x)\right\vert =0$ for $i=1,2$.
\end{theorem}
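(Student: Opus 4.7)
I would work on the Banach space $E=\{u\in W^{1,p(\cdot)}(\mathbb{R}^N):\int V(x)|u|^{p(x)}\,dx<\infty\}$ endowed with the natural Luxemburg-type norm induced by $\int(|\nabla u|^{p(x)}+V(x)|u|^{p(x)})\,dx$, and with energy
$$J(u)=\int_{\mathbb{R}^N}\frac{1}{p(x)}\bigl(|\nabla u|^{p(x)}+V(x)|u|^{p(x)}\bigr)dx-\int_{\mathbb{R}^N}F(x,u)\,dx.$$
Since $(H_3)$ is \emph{not} assumed in Theorem~1.1 but two solutions of opposite sign are requested, I would use the standard truncation device: put $f^+(x,t)=f(x,t)$ for $t\ge 0$, $f^+(x,t)=0$ for $t<0$, with primitive $F^+$ and functional $J^+$, and symmetrically $f^-$, $J^-$. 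Critical points of $J^+$ (resp.\ $J^-$) are non-negative (resp.\ non-positive) weak solutions of $(P)$.

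\textbf{Mountain-pass geometry and the Cerami condition.} Combining $(H_2)$ with the continuous embedding $E\hookrightarrow L^{p(\cdot)}\cap L^{\alpha(\cdot)}$ yields a positive lower bound of $J^\pm$ on a small sphere; meanwhile, the lower bound in $(H_1)$ gives $F(x,t)\ge c|t|^{p(x)}[\ln(e+|t|)]^{a(x)-1}$ for $|t|\ge M$, which is genuinely super-$p^+$ because $a\gg p$, so $J^\pm(t\varphi)\to-\infty$ for a fixed $\varphi\in C_c^\infty(\mathbb{R}^N)$ of the correct sign. The main technical step is to verify Cerami's $(C)_c$ condition. Given $\{u_n\}$ with $J^\pm(u_n)\to c$ and $(1+\|u_n\|)\|(J^\pm)'(u_n)\|_{E^*}\to 0$, I would form $p^-J^\pm(u_n)-\langle (J^\pm)'(u_n),u_n\rangle$. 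Using the right half of $(H_1)$ this expression dominates, up to controlled contributions on $\{|u_n|\le M\}$, the integral $C_1\int |u_n|^{p(x)}[\ln(e+|u_n|)]^{a(x)-1}\,dx$. A contradiction/normalisation argument with $v_n=u_n/\|u_n\|$, exploiting precisely $a\gg p$ (so that this logarithmic weight prevents $\|u_n\|\to\infty$) in the spirit of \cite{jj1,zj3}, gives boundedness. Then the potential-well compactness from $(V)$ (invoked later as Proposition~2.5--$ii$) yields $u_n\to u$ strongly in $L^{q(\cdot)}$ for all $p\ll q\ll p^\ast$, and the well-known $(S_+)$-property of $-\Delta_{p(x)}+V(x)|\cdot|^{p(x)-2}\cdot$ upgrades this to strong convergence in $E$.

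\textbf{Existence, positivity, and regularity.} The Mountain Pass Theorem now produces critical points $u_1$ of $J^+$ and $u_2$ of $J^-$ with $J^\pm(u_i)>0$, in particular $u_i\not\equiv 0$. Testing the Euler equation of $J^+$ with $u_1^-:=\min(u_1,0)$ shows $u_1\ge 0$, and a strong maximum principle tailored to the operator $-\Delta_{p(x)}\cdot+V(x)|\cdot|^{p(x)-2}\cdot$ promotes this to $u_1>0$; symmetrically $u_2<0$. Local $C^{1,\alpha}$ regularity theory for $p(x)$-Laplacian equations (as in the references cited in the introduction) then lifts the $u_i$ to strong solutions.

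\textbf{Pointwise decay at infinity and main obstacle.} For the asymptotics $u_i(x)\to 0$ and $|\nabla u_i(x)|\to 0$ as $|x|\to\infty$, I would follow exactly the toolkit announced in the abstract: localise onto annuli $A_R=\{R<|x|<2R\}$, use $V(x)\to\infty$ to produce a radial supersolution of the linear-in-the-zeroth-order operator on $A_R$, and deduce by the comparison principle an $L^\infty$-bound on $u_i$ in $A_R$ that tends to zero as $R\to\infty$; combine this with a scaled interior $C^{1,\alpha}$ estimate on each annulus (whose constant degenerates favourably with the small $L^\infty$-norm) to conclude the gradient decay. The genuine difficulty of the whole argument is the Cerami step: the logarithmic condition $(H_1)$ is much weaker than Ambrosetti--Rabinowitz and is crafted so as to match the variable-exponent nature of the problem, so the boundedness of Cerami sequences cannot be obtained by the classical direct integral comparison and must go through the scaling argument sketched above. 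The secondary difficulty is the joint pointwise decay of $u_i$ and $|\nabla u_i|$, handled through the localisation/comparison/regularity package just described.
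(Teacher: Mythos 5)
There is a genuine gap, and it sits exactly at the point your sketch treats as routine. You claim that the lower bound in $(H_1)$, namely $F(x,t)\geq c\,|t|^{p(x)}[\ln(e+|t|)]^{a(x)}$ for $|t|\geq M$, is ``genuinely super-$p^{+}$ because $a\gg p$'', so that $\varphi^{\pm}(t\varphi)\to-\infty$ for a fixed bump $\varphi$. This is false: the condition $a\gg p$ only strengthens the logarithmic factor, not the power, so at any point where $p(x)<p^{+}$ one has $F(x,t)/|t|^{p^{+}}\to 0$. For a fixed test function whose support meets a region where $p$ is non-constant, the gradient term grows like $t^{\sup_{\mathrm{supp}\varphi}p}$ while the $F$-term is damped near the maximizer of $p$ (which typically lies where $\varphi$ is small), so unboundedness below is not automatic. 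This is precisely the main novel difficulty of the paper: it is resolved by the geometric Lemma 2.7 (choosing $x_{0}$ with $\nabla p(x_{0})\neq 0$ and the cone-shaped region $B(x_{0},\varepsilon,\delta,\theta)$ on which $p$ increases radially) together with Lemma 2.9, where the cone function $h(x)=(\varepsilon-|x-x_{0}|)_{+}$ and spherical-coordinate estimates show that both the gradient and potential terms are of order $t^{p(\varepsilon,\omega)}/\ln t$ while the $F$-term is at least $(\ln t)^{\epsilon_{1}}\,t^{p(\varepsilon,\omega)}/\ln t$; the whole point is that the $(\ln t)^{\epsilon_{1}}$ gain coming from $a\gg p$ only becomes usable after this localization. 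Without this (or an equivalent substitute), your mountain-pass geometry in the ``negative direction'' is unproved.

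The same misreading of $(H_1)$ undermines your Cerami step. A normalisation argument with $v_{n}=u_{n}/\|u_{n}\|$ ``in the spirit of Alves--Liu'' relies on the super-$p^{+}$ condition $F(x,t)/|t|^{p^{+}}\to\infty$, which is not assumed here; moreover, forming $p^{-}J(u_{n})-\langle J'(u_{n}),u_{n}\rangle$ does not match the $p(x)$-weighted quantity $tf(x,t)-p(x)F(x,t)$ controlled by $(H_1)$ and produces the uncontrolled negative term $\int(\tfrac{p^{-}}{p(x)}-1)(|\nabla u_{n}|^{p(x)}+V|u_{n}|^{p(x)})\,dx$. The paper instead tests $\varphi'(u_{n})$ against $\tfrac{1}{p(x)}u_{n}$ (accepting an extra $\nabla p$ term) and against the logarithmically weighted function $u_{n}/\ln(e+|u_{n}|)$, and then runs a delicate bootstrap (inequalities (3.1)--(3.8), Claims 1 and 2, and the potential well $(V)$ with a large radius $R_{0}$) to bound the Cerami sequence; nothing in your sketch replaces this. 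A smaller omission: for the geometry near $0$ the paper cannot simply invoke the embedding $X\hookrightarrow L^{\alpha(\cdot)}$, because globally $\inf\alpha$ need not exceed $\sup p$; it decomposes $\mathbb{R}^{N}$ into small cubes with a uniform local gap $\inf_{\Omega_{i}}\alpha-\sup_{\Omega_{i}}p\geq\epsilon>0$ and sums the local estimates. Your positivity and decay steps (truncation $f^{\pm}$, strong maximum principle, localisation plus comparison and $C^{1,\alpha}$ theory) do agree in spirit with the paper's Steps 1--3.
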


\begin{theorem}
Suppose that the conditions (V), (p), (H$_{0}$), (H$%
_{1} $) and (H$_{3}$) hold, then there are infinitely many pairs of strong
solutions \{$\pm u_{i}$\} to the problem (P) which satisfy $\underset{%
\left\vert x\right\vert \rightarrow \infty }{\lim }u_{i}(x)=0$ and $\underset%
{\left\vert x\right\vert \rightarrow \infty }{\lim }\left\vert \nabla
u_{i}(x)\right\vert =0$ for $i=1,2,\cdots $.
\end{theorem}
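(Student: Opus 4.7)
The plan is to apply a symmetric/Fountain type critical point theorem in place of the mountain pass argument used for Theorem 1.1, exploiting the evenness hypothesis $(H_3)$. Let $E$ denote the natural weighted Sobolev space associated to $(P)$, equipped with a Luxemburg norm that simultaneously controls $\int |\nabla u|^{p(x)}$ and $\int V(x)|u|^{p(x)}$. The energy functional is
\begin{equation*}
I(u)=\int_{\mathbb{R}^{N}}\frac{1}{p(x)}\bigl(|\nabla u|^{p(x)}+V(x)|u|^{p(x)}\bigr)\,dx-\int_{\mathbb{R}^{N}}F(x,u)\,dx,
\end{equation*}
which is of class $C^{1}$ on $E$, and by $(H_3)$ it is even with $I(0)=0$. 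Under $(V)$, standard arguments (cf. Proposition 2.5(ii) and Lemma 3.3/3.4 referenced in the introduction) yield that $E$ embeds compactly into $L^{q(\cdot)}(\mathbb{R}^{N})$ for every variable exponent with $p(x)\leq q(x)\ll p^{\ast}(x)$; this is the crucial ingredient replacing the loss of compactness on an unbounded domain.

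Next I would verify the Cerami condition $(C)_{c}$ at every level, using the same mechanism already set up in the proof of Theorem 1.1 (which only needs $(V)$, $(p)$, $(H_0)$, $(H_1)$, and not $(H_2)$). For a Cerami sequence $(u_{n})$, the combination
\begin{equation*}
I(u_{n})-\Bigl\langle I'(u_{n}),\tfrac{u_{n}}{p(x)}\Bigr\rangle_{\!*}\;\;\text{together with}\;\;\|I'(u_{n})\|_{E^{*}}(1+\|u_{n}\|)\to 0
\end{equation*}
produces, via the left inequality of $(H_1)$, a uniform bound on $\int_{\{|u_{n}|\geq M\}}|u_{n}|^{p(x)}[\ln(e+|u_{n}|)]^{a(x)-1}\,dx$. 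Because $a\gg p$, this $\log$-weighted $L^{p(x)}$ bound upgrades to $E$-boundedness, and then the compact embedding into $L^{\alpha(\cdot)}$ combined with the $(S_{+})$ property of the $p(x)$-Laplacian on $E$ gives strong convergence of a subsequence.

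I would then choose a Schauder basis of $E$, set $Y_{k}=\mathrm{span}\{e_{1},\dots,e_{k}\}$ and $Z_{k}=\overline{\mathrm{span}}\{e_{j}:j\geq k\}$, and check the two geometric hypotheses of the Fountain theorem. On $Y_{k}$ (finite dimensional, all norms equivalent), integrating the lower bound in $(H_1)$ shows $F(x,t)/|t|^{p(x)}\to\infty$ as $|t|\to\infty$, which forces $I(u)\to-\infty$ as $\|u\|\to\infty$ in $Y_{k}$, so the maximum of $I$ on a large sphere in $Y_{k}$ is nonpositive. On $Z_{k}$, $(H_0)$ gives $|F(x,u)|\leq C(|u|^{p(x)}+|u|^{\alpha(x)})$; choosing the radius $r_{k}$ to equal the inverse of the best constant of the embedding $Z_{k}\hookrightarrow L^{\alpha(\cdot)}$ (a quantity which tends to $0$ as $k\to\infty$ by the compactness of the embedding) yields $\inf_{u\in Z_{k},\,\|u\|=r_{k}}I(u)\to\infty$. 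The Fountain theorem for even functionals satisfying Cerami then delivers an unbounded sequence of critical values, hence infinitely many pairs $\{\pm u_{i}\}$ of nontrivial weak solutions.

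Finally, for the pointwise asymptotics, I would appeal to the regularity machinery invoked for Theorem 1.1: a Moser/De Giorgi iteration adapted to the variable exponent setting gives $u_{i}\in L^{\infty}(\mathbb{R}^{N})$, then local $C^{1,\beta}$ regularity for the $p(x)$-Laplacian yields continuity of $\nabla u_{i}$. To push the decay at infinity, I would use $(V)$ together with a comparison argument: on the region $\{V(x)\geq R\}$ with $R$ large, the equation $-\Delta_{p(x)}u_{i}+V(x)|u_{i}|^{p(x)-2}u_{i}=f(x,u_{i})$ and the subcritical bound $(H_{0})$ force $u_{i}$ to lie below an explicit super-solution that vanishes as $|x|\to\infty$, giving $u_{i}(x)\to 0$; a gradient bound via the $C^{1,\beta}$ estimate on unit balls with small $L^\infty$ data then produces $|\nabla u_{i}(x)|\to 0$. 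The main obstacle is the Cerami step: without AR, the boundedness of Cerami sequences is delicate and relies essentially on the logarithmically weighted estimate built into $(H_1)$, and on exploiting $a\gg p$ uniformly on $\mathbb{R}^{N}$ to convert it into an $E$-bound.
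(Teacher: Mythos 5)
There is a genuine gap in your treatment of the finite-dimensional part of the linking geometry. You assert that on $Y_{k}$, since all norms are equivalent and $(H_1)$ gives $F(x,t)/|t|^{p(x)}\to\infty$, one gets $I(u)\to-\infty$ as $\|u\|\to\infty$. This inference is exactly what fails in the variable exponent setting and is the central difficulty the paper is built to overcome. Under $(H_1)$ the nonlinearity is only \emph{logarithmically} super-$p(x)$: $F(x,t)\gtrsim |t|^{p(x)}[\ln(e+|t|)]^{a(x)}$, which does \emph{not} imply $F(x,t)/|t|^{p^{+}}\to\infty$ (the hypothesis under which the standard Fountain/norm-equivalence computation works, as in Alves--Liu). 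Along a ray $tu$ the principal part $\int \frac{1}{p(x)}\bigl(|\nabla (tu)|^{p(x)}+V|tu|^{p(x)}\bigr)dx$ grows like $t^{\sup_{S} p}$ where $S$ is the relevant support, while the lower bound on $\int F(x,tu)\,dx$ only carries the weight $t^{p(x)}[\ln t]^{a(x)}$ pointwise; at points where $p(x)<\sup_S p$ the logarithmic gain cannot compensate the loss in the power of $t$, and near the set where $p$ attains its maximum the function $u$ may be small (indeed, for the natural test functions it vanishes there). So norm equivalence on $Y_k$ tells you nothing about this competition, and your one-line argument does not establish the required anticoercivity.

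The paper resolves precisely this point by a localization argument: Lemma 2.7 controls the geometry of $p$ near a point $x_0$ with $\nabla p(x_0)\neq 0$ (the set $B(x_0,\varepsilon,\delta,\theta)$), and the subsequent lemma shows, via a careful spherical-coordinate estimate, that for the specific tent function $h$ supported in $B(x_0,\varepsilon)$ one has $\varphi(th)\to-\infty$, the decisive margin being a factor $(\ln t)^{\epsilon_1}$ coming from $a\geq p+\epsilon_1$ on that ball. Accordingly, in the proof of Theorem 1.2 the negative space is \emph{not} a generic span $Y_k$ of basis vectors but $V_k^{-}=\mathrm{span}\{h_1,\dots,h_k\}$ with disjointly supported tents centered at points where $\nabla p\neq 0$, paired with $V_k^{+}=Z_k$ in a symmetric critical point theorem for such pairs (Lemma 3.5), with $\beta_k\to 0$ (Lemma 3.4) driving the levels to $+\infty$ exactly as in your $Z_k$ estimate (that half of your geometry is fine). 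To repair your proof you would either have to prove anticoercivity of $I$ on arbitrary finite-dimensional subspaces under the log-type condition $(H_1)$ — which you have not done and which is not a standard fact — or adopt the paper's construction of special negative subspaces. A secondary remark: your sketch that the Cerami bound on $\int_{\{|u_n|\geq M\}}|u_n|^{p(x)}[\ln(e+|u_n|)]^{a(x)-1}dx$ ``upgrades to $E$-boundedness because $a\gg p$'' compresses the real work of Lemma 3.3 (the test function $u_n/\ln(e+|u_n|)$, the two claims, and the essential use of $V(x)\to\infty$), but since that lemma is proved in the paper under $(V),(p),(H_0),(H_1)$ only, citing it is legitimate; the unsupported $Y_k$ step is the substantive flaw.
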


For the rigorous definition of \textit{strong solutions} to problem $(P)$, see Definition
\ref{definition}. We postpone the rigorous introduction of definitions of solutions to the problem $(P)$ until Definition
\ref{definition}, as
it is only possible and more natural after we introduce the necessary functional-analytic framework.
Meanwhile. we have the following remark.

\begin{remark}
$(i)$ Let $f(x,t)=\left\vert t\right\vert ^{p(x)-2}t[\ln
(1+\left\vert t\right\vert )]^{a(x)}$, then we can see $f$ satisfies the
condition $(H_{0})-(H_{3})$, but it does not satisfy the
Ambrosetti-Rabinowitz condition. $(ii)$ We do not need any monotone
assumption on the nonlinearity $f$ in our main results.
\end{remark}

The rest of the paper is organized as follows. In Section 2, we recall for our study the minimal functional-analytic preparation
related to the variable exponent Lebesgue and Sobolev spaces. Some useful lemmas are also included in this section. In Section 3, we give the proofs of the above main results.

\section{Preliminaries}

In order to discuss problem $(P)$ rigorously, we recall here some necessary results on the variable exponent Lebesgue space
$L^{p(\cdot )}(\mathbb{R}^{N})$ and Sobolev space $W^{1,p(\cdot )}(
\mathbb{R}^{N})$. For more systematic information, please see the monographs \cite{j5}, \cite{rr}, papers \cite{e9}, \cite{e12}, \cite{e15}, \cite{e20},  \cite{e26}, \cite{yao} and the references therein. In this section, we shall validate the assumptions $(V)$ and $(p)$.

We denote $C_{+}(\mathbb{R}^{N}) =\{h\in C(\mathbb{R}^{N})\, \big|\, h(x)>>1\}$.
For $h\in C(\mathbb{R}
^{N})$, we define
$$h^{+}=\underset{
\mathbb{R}
^{N}}{\sup }\,h(x),\quad h^{-}=\underset{\mathbb{R}
^{N}}{\inf }\, h(x).$$
Now we introduce the variable exponent Lebesgue space $L^{p(\cdot )}(\mathbb{R}^{N})$ as follows:
$$L^{p(\cdot )}(
\mathbb{R}
^{N}) =\left\{ u\mid u\text{ is a measurabled real-value function, }\int_{\mathbb{R}
^{N}}\left\vert u(x)\right\vert ^{p(x)}dx<\infty \right\}.$$

On $%
L^{p(\cdot )}(
\mathbb{R}^{N})$, we introduce the Luxemberg norm  by

\begin{center}
$\left\vert u\right\vert _{p(\cdot )}=$ $\inf \left\{ \lambda >0\left\vert
\int_{\mathbb{R}
^{N}}\left\vert \frac{u(x)}{\lambda }\right\vert ^{p(x)}dx\leq 1\right.
\right\} $.
\end{center}

Then ($L^{p(\cdot )}(\mathbb{R}
^{N})$, $\left\vert \cdot \right\vert _{p(\cdot )}$) becomes a Banach space
and it is the so-called variable exponent Lebesgue space.\newline

For the variable exponent Lebesgue spaces, we have the following version of H$\ddot{o}$lder's inequality and embedding.
\begin{proposition}
(see \cite{j5}, \cite{e9}, \cite{e12}). i) The
space $(L^{p(\cdot )}(
\mathbb{R}
^{N}),\left\vert \cdot \right\vert _{p(\cdot )})$ is a separable, uniform
convex Banach space, and its conjugate space is $L^{q(\cdot )}(\mathbb{R}
^{N}),$ where $\frac{1}{q(x)}+\frac{1}{p(x)}\equiv 1$. For any $u\in
L^{p(\cdot )}(
\mathbb{R}
^{N})$ and $v\in L^{q(\cdot )}(\mathbb{R}
^{N})$, we have
\begin{equation*}
\left\vert \int_{
\mathbb{R}
^{N}}uvdx\right\vert \leq (\frac{1}{p^{-}}+\frac{1}{q^{-}})\left\vert
u\right\vert _{p(\cdot )}\left\vert v\right\vert _{q(\cdot )}.
\end{equation*}

ii) If $p_{1},$ $p_{2}\in C_{+}(
\mathbb{R}
^{N})$, $\Omega \subset
\mathbb{R}
^{N}$ is a bounded domain, $p_{1}(x)\leq p_{2}(x)$ for any $x\in \Omega ,$
then $L^{p_{2}(\cdot )}(\Omega )\subset L^{p_{1}(\cdot )}(\Omega ),$ and the
embedding is continuous.
\end{proposition}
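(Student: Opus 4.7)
The plan is to treat parts (i) and (ii) separately; both are standard facts from the theory of variable-exponent Lebesgue spaces, and the work lies in adapting the classical $L^{p}$ techniques to the Luxemburg norm. For the H\"older-type inequality in (i), I would apply Young's pointwise inequality $|uv|\leq \tfrac{|u|^{p(x)}}{p(x)}+\tfrac{|v|^{q(x)}}{q(x)}$ after normalizing by $|u|_{p(\cdot)}$ and $|v|_{q(\cdot)}$ so that the two modulars are each $\leq 1$; integrating then yields the constant $\tfrac{1}{p^{-}}+\tfrac{1}{q^{-}}$ from the worst-case pointwise bound $\tfrac{1}{p(x)}+\tfrac{1}{q(x)}$. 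The norm axioms follow from convexity of $t\mapsto t^{p(x)}$ in the modular, and completeness from extracting a subsequence $\{u_{n_k}\}$ with $|u_{n_{k+1}}-u_{n_{k}}|_{p(\cdot)}\leq 2^{-k}$, applying Fatou's lemma to the modular, and identifying the a.e.\ limit via the unit-ball relation between modular and norm.

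For the remaining structural claims in (i), separability is handled by approximating $u$ first by bounded compactly supported truncations (via dominated convergence in the modular) and then by simple functions on rectangles with rational corners and rational values. Uniform convexity is the subtlest point; it rests on Clarkson-type inequalities in the variable-exponent setting, valid under $1<p^{-}\leq p^{+}<\infty$ (with $p^{-}>1$ guaranteed by $(p)$), proved by first establishing a pointwise modular Clarkson inequality and then passing to the Luxemburg norm through the $p^{-}/p^{+}$ dichotomy. Duality $\big(L^{p(\cdot)}\big)^{*}\cong L^{q(\cdot)}$ is a Radon--Nikodym-type representation: a continuous linear functional $\varphi$ on $L^{p(\cdot)}$ is represented on simple functions by integration against some measurable $v$, and a saturation argument testing $\varphi$ against $u(x)=|v(x)|^{q(x)-1}\mathrm{sgn}\,v(x)$ combined with the H\"older inequality shows $v\in L^{q(\cdot)}$.

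For part (ii), the continuous inclusion reduces to a single application of H\"older from (i). With $r(x):=p_{2}(x)/p_{1}(x)\geq 1$ and $r'(x)$ its pointwise conjugate, I would write $|u(x)|^{p_{1}(x)}=|u(x)|^{p_{1}(x)}\cdot 1$ and apply part (i) to the exponents $r(x)$ and $r'(x)$; boundedness of $\Omega$ gives $1\in L^{r'(\cdot)}(\Omega)$ with norm depending on $|\Omega|$, and the standard modular-to-norm conversion --- splitting according to whether $|u|_{p_{2}(\cdot),\Omega}\leq 1$ or $>1$ --- turns the modular bound into the continuous embedding $|u|_{p_{1}(\cdot),\Omega}\leq C(|\Omega|,p_{1},p_{2})\,|u|_{p_{2}(\cdot),\Omega}$. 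The main recurring obstacle, rather than any single step, is the systematic passage between the modular $\int|u|^{p(x)}\,dx$ and the Luxemburg norm, which in the variable-exponent setting behaves well only in the $p^{-}/p^{+}$ regime and must always be controlled via these extremes.
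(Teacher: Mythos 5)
The paper gives no proof of this proposition: it is quoted directly from the cited references (Diening--Harjulehto--H\"ast\"o--R\r{u}\v{z}i\v{c}ka \cite{j5}, Fan--Zhao \cite{e9}, Fan--Zhang \cite{e12}), and your outline reproduces exactly the standard arguments found there --- Young's inequality after Luxemburg normalization for the H\"older bound, Fatou on the modular for completeness, Clarkson-type inequalities (under $1<p^-\leq p^+<\infty$) for uniform convexity, a representation argument for duality, and H\"older with the exponent $p_{2}(x)/p_{1}(x)$ together with $1\in L^{r'(\cdot)}(\Omega)$ for the embedding on a bounded domain. So your proposal is correct and is essentially the same route the paper relies on by citation; no discrepancy to report.
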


In the study of nonlinear elliptical variational problems, the property of the Nemytsky operator
plays an important role. In the variable exponent Lebesgue spaces framework, we have the following
property concerning the Nemytsky operator.

\begin{proposition}
(see \cite{e12}). If $f:$ $\mathbb{R}
^{N}\times
\mathbb{R}
\rightarrow
\mathbb{R}
$ is a Caratheodory function and satisfies
\begin{equation*}
\left\vert f(x,s)\right\vert \leq h(x)+b\left\vert s\right\vert
^{p_{1}(x)/p_{2}(x)}\ \text{\ for any }x\in \mathbb R^N ,s\in
\mathbb{R},
\end{equation*}
where $p_{1}$, $p_{2}\in C_{+}(\mathbb{R}
^{N})$ , $h\in L^{p_{2}(\cdot )}(\mathbb{R}
^{N})$, $h(x)\geq 0$, $b\geq 0$, then the Nemytsky operator from $
L^{p_{1}(\cdot )}(\mathbb R^N )$ to $L^{p_{2}(\cdot )}(\mathbb R^N)$ defined by $
(N_{f}u)(x)=f(x,u(x))$ is a continuous and bounded operator.
\end{proposition}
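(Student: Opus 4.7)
The plan is to verify boundedness and continuity of $N_f$ separately, in both cases by translating statements about the Luxemburg norm into statements about the modular $\rho_{p_2}(v):=\int_{\mathbb{R}^N}|v(x)|^{p_2(x)}dx$, since the relation between modular and norm is the standard bridge in variable exponent spaces.

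For boundedness, I would raise the pointwise bound on $|f(x,s)|$ to the $p_2(x)$-th power. Using the elementary inequality $(A+B)^{p_2(x)}\le 2^{p_2^{+}-1}(A^{p_2(x)}+B^{p_2(x)})$ with $A=h(x)$ and $B=b|u(x)|^{p_1(x)/p_2(x)}$, the exponent on $|u(x)|$ becomes exactly $p_1(x)$. Hence for any $u\in L^{p_1(\cdot)}(\mathbb{R}^N)$,
\begin{equation*}
\rho_{p_2}(N_f u)\le 2^{p_2^{+}-1}\bigl(\rho_{p_2}(h)+b^{p_2^{+}}\rho_{p_1}(u)+b^{p_2^{-}}\rho_{p_1}(u)\bigr),
\end{equation*}
which is finite. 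The standard modular–norm inequalities $\min(|v|_{p_2(\cdot)}^{p_2^{-}},|v|_{p_2(\cdot)}^{p_2^{+}})\le \rho_{p_2}(v)\le \max(|v|_{p_2(\cdot)}^{p_2^{-}},|v|_{p_2(\cdot)}^{p_2^{+}})$ then convert this into a norm estimate and show that bounded sets in $L^{p_1(\cdot)}$ are sent to bounded sets in $L^{p_2(\cdot)}$.

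For continuity, I would use the classical subsequence trick: to prove $N_f u_n\to N_f u$ in $L^{p_2(\cdot)}$ whenever $u_n\to u$ in $L^{p_1(\cdot)}$, it suffices to extract from any subsequence a further subsequence along which convergence holds. After extraction I may assume $u_n(x)\to u(x)$ a.e.\ and that there is a dominating function $g\in L^{p_1(\cdot)}(\mathbb{R}^N)$ with $|u_n(x)|\le g(x)$ a.e.\ (the variable exponent analogue of the Riesz–Fischer extraction lemma). The Caratheodory property then yields $f(x,u_n(x))\to f(x,u(x))$ a.e., and the growth hypothesis provides the pointwise envelope
\begin{equation*}
|f(x,u_n(x))-f(x,u(x))|^{p_2(x)}\le C\bigl(h(x)^{p_2(x)}+b^{p_2^{+}}g(x)^{p_1(x)}\bigr)\in L^{1}(\mathbb{R}^N).
\end{equation*}
The ordinary Lebesgue dominated convergence theorem (applied to the $L^1$ functions $|f(x,u_n)-f(x,u)|^{p_2(x)}$) then gives $\rho_{p_2}(N_f u_n-N_f u)\to 0$, and the modular–norm equivalence above converts this into norm convergence.

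The only genuinely delicate step is the extraction of an a.e.\ dominating envelope $g\in L^{p_1(\cdot)}(\mathbb{R}^N)$ from a norm-convergent sequence in a variable exponent Lebesgue space; this is the place where one cannot just quote the classical theorem but must invoke the corresponding statement for $L^{p(\cdot)}$, which in turn relies on the fact that $L^{p_1(\cdot)}(\mathbb{R}^N)$ is a Banach function space in which norm convergence implies convergence in measure on sets of finite measure. Once that technical point is granted, everything else is a direct transcription of the constant exponent Krasnoselskii argument, with the variable exponent $p_2(x)$ handled uniformly via its essential supremum $p_2^{+}$ and essential infimum $p_2^{-}$.
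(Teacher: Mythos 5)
The paper does not prove this proposition at all; it is quoted from the reference \cite{e12}, and your argument is exactly the standard Krasnoselskii-type proof given there and in the monograph literature: modular estimates via $(A+B)^{p_2(x)}\le 2^{p_2^{+}-1}(A^{p_2(x)}+B^{p_2(x)})$ for boundedness, and for continuity the subsequence principle combined with extraction of an a.e.\ convergent, $L^{p_1(\cdot)}$-dominated subsequence (obtained from $\rho_{p_1}(u_n-u)\to 0$ and the classical $L^1$ extraction applied to $|u_n-u|^{p_1(x)}$) followed by dominated convergence and the modular--norm equivalence. Your proposal is correct, with the only implicit standing hypothesis being $p_2^{+}<\infty$, which is part of the usual $C_{+}$ framework here.
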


Compared with the constant exponent spaces, one of the distinct features of variable exponent spaces is that there is no strict equality relation between the module and the norm of a function in the variable exponent spaces. Related to this feature, we have the following two propositions.

\begin{proposition}
(see \cite{e12}). If we denote
\begin{equation*}
\rho (u)=\int_{
\mathbb{R}
^{N}}\left\vert u\right\vert ^{p(x)}dx\text{, }\forall u\in L^{p(\cdot )}(\mathbb{R}^{N}),
\end{equation*}
then there exists a $\xi \in
\mathbb{R}^{N}$ such that $\left\vert u\right\vert _{p(\cdot )}^{p(\xi )}=\int_{\mathbb{R}
^{N}}\left\vert u\right\vert ^{p(x)}dx$ and

i) $\left\vert u\right\vert _{p(\cdot )}<1(=1;>1)\Longleftrightarrow \rho
(u)<1(=1;>1);$

ii) $\left\vert u\right\vert _{p(\cdot )}>1\Longrightarrow \left\vert
u\right\vert _{p(\cdot )}^{p^{-}}\leq \rho (u)\leq \left\vert u\right\vert
_{p(\cdot )}^{p^{+}};$ $\left\vert u\right\vert _{p(\cdot
)}<1\Longrightarrow \left\vert u\right\vert _{p(\cdot )}^{p^{-}}\geq \rho
(u)\geq \left\vert u\right\vert _{p(\cdot )}^{p^{+}};$

iii) $\left\vert u\right\vert _{p(\cdot )}\rightarrow 0\Longleftrightarrow
\rho (u)\rightarrow 0;$ $\left\vert u\right\vert _{p(\cdot )}\rightarrow
\infty \Longleftrightarrow \rho (u)\rightarrow \infty$.
\end{proposition}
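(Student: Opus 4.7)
My plan is to reduce everything to the behaviour, as $\lambda$ varies in $(0,\infty)$, of the single-variable function $\Phi_u(\lambda):=\rho(u/\lambda)=\int_{\mathbb{R}^N}|u(x)|^{p(x)}\lambda^{-p(x)}\,dx$, and then to produce the point $\xi$ by applying the intermediate value theorem to the continuous exponent $p(\cdot)$.

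First I would record that, for $u\not\equiv 0$, $\Phi_u$ is continuous on $(0,\infty)$ by dominated convergence, strictly decreasing, and satisfies $\Phi_u(\lambda)\to +\infty$ as $\lambda\to 0^+$ and $\Phi_u(\lambda)\to 0$ as $\lambda\to +\infty$. Consequently the infimum in the Luxemburg definition is attained, and $\lambda_0:=|u|_{p(\cdot)}$ is the unique positive number with $\Phi_u(\lambda_0)=1$. Part (i) then drops out by comparing $\Phi_u(1)=\rho(u)$ with $\Phi_u(\lambda_0)=1$ and using strict monotonicity. For (ii), I use the identity $\int_{\mathbb{R}^N}|u|^{p(x)}\lambda_0^{-p(x)}\,dx=1$: when $\lambda_0>1$ the pointwise bounds $\lambda_0^{-p^+}\leq \lambda_0^{-p(x)}\leq \lambda_0^{-p^-}$ give, after integration, $\lambda_0^{p^-}\leq \rho(u)\leq \lambda_0^{p^+}$; the case $\lambda_0<1$ is symmetric by reversing the pointwise inequality. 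Claim (iii) then follows from (ii) combined with (i), since $|u|_{p(\cdot)}\to 0$ and $|u|_{p(\cdot)}\to\infty$ eventually force $\lambda_0<1$ and $\lambda_0>1$ respectively.

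For the main equality $|u|_{p(\cdot)}^{p(\xi)}=\rho(u)$, I would use (ii) to locate $\log_{\lambda_0}\rho(u)$ inside $[p^-,p^+]$ (after separating the three cases $\lambda_0>1$, $\lambda_0<1$, $\lambda_0=1$; the last forces $\rho(u)=1$ by (i), so any $\xi$ works). By assumption $(p)$ the exponent $p$ is continuous on the connected space $\mathbb{R}^N$, so its image is an interval whose closure is $[p^-,p^+]$, and the IVT produces $\xi\in\mathbb{R}^N$ with $p(\xi)=\log_{\lambda_0}\rho(u)$, equivalently $\lambda_0^{p(\xi)}=\rho(u)$. The step I expect to be the main obstacle is handling the boundary cases $\rho(u)=\lambda_0^{p^\pm}$, in which $\log_{\lambda_0}\rho(u)$ may equal an endpoint that the continuous map $p$ is not a priori guaranteed to attain on all of $\mathbb{R}^N$; here I would unpack the integration step in (ii) and note that equality in the pointwise bound forces $p(x)$ to equal the extremal value a.e.\ on $\{u\neq 0\}$, so either that extremal value is actually attained by $p$ (and the IVT conclusion still holds) or $u$ vanishes a.e., which is excluded at the outset. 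Everything else is routine bookkeeping once $\Phi_u$'s monotonicity and continuity have been recorded.
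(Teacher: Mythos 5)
Your proof is correct. Note, however, that the paper itself offers no argument for this proposition: it is quoted verbatim from the variable-exponent literature (Fan--Zhang \cite{e12}), so there is no in-paper proof to compare against. Your route is the standard one for such modular--norm relations: reduce to the one-variable function $\Phi_u(\lambda)=\rho(u/\lambda)$, use its continuity, strict monotonicity and limit behaviour to identify $\lambda_0=|u|_{p(\cdot)}$ as the unique root of $\Phi_u(\lambda)=1$, and then read off (i)--(iii) by comparing $\Phi_u(1)$ with $\Phi_u(\lambda_0)$ and sandwiching $\lambda_0^{-p(x)}$ between $\lambda_0^{-p^{\pm}}$. Two small remarks. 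First, every step of this reduction (finiteness and continuity of $\Phi_u$ via dominated convergence, attainment of the Luxemburg infimum, and the conclusion $\lambda_0\to 0$ in (iii)) tacitly uses $p^+<\infty$ together with $p^->1$; this is the standing assumption in \cite{e12} and implicit in the statement through $p^{\pm}$, but it is worth saying explicitly since assumption $(p)$ of the paper by itself does not force $p$ to be bounded above. Second, your treatment of the existence of $\xi$ is the genuinely nontrivial point, and you handle it properly: the intermediate value theorem on the connected set $\mathbb{R}^{N}$ covers $\log_{\lambda_0}\rho(u)\in(p^-,p^+)$, and in the endpoint case equality in the sandwich forces $p\equiv p^{\pm}$ a.e.\ on the positive-measure set $\{u\neq 0\}$, so the extremal value is actually attained (with the degenerate cases $\lambda_0=1$ and $u\equiv 0$ being trivial). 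This endpoint discussion is exactly what a careless IVT argument would miss, so your proof is complete as sketched.
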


\begin{proposition}
(see \cite{e12}). If $u$, $u_{n}\in L^{p(\cdot )}(\mathbb{R}^{N})$, $n=1,2,\cdots ,$ then the following statements are equivalent to
each other.

1) $\underset{k\rightarrow \infty }{\lim }$ $\left\vert u_{k}-u\right\vert
_{p(\cdot )}=0;$

2) $\underset{k\rightarrow \infty }{\lim }$ $\rho \left( u_{k}-u\right) =0;$

3) $u_{k}$ $\rightarrow $ $u$ in measure in $\mathbb{R}^{N}$ and $\underset{k\rightarrow \infty }{\lim }$ $\rho \left( u_{k}\right)
=\rho (u).$
\end{proposition}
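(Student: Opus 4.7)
The equivalence (1) $\Leftrightarrow$ (2) is immediate from part iii) of the preceding proposition, applied to the sequence $v_k := u_k - u \in L^{p(\cdot)}(\mathbb{R}^N)$: one has $|v_k|_{p(\cdot)} \to 0$ iff $\rho(v_k) \to 0$. So the substantive content is the equivalence of either of (1), (2) with (3), and I would target (2) $\Leftrightarrow$ (3).

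For (2) $\Rightarrow$ (3), I would first verify convergence in measure by a Chebyshev-type argument: for any $\varepsilon > 0$, writing $c(\varepsilon) := \min(\varepsilon^{p^-}, \varepsilon^{p^+})$,
\[
\mu\bigl(\{x \in \mathbb{R}^N : |u_k(x) - u(x)| > \varepsilon\}\bigr) \leq \frac{1}{c(\varepsilon)} \int_{\mathbb{R}^N} |u_k - u|^{p(x)}\, dx = \frac{\rho(u_k - u)}{c(\varepsilon)} \to 0,
\]
which uses $1 < p^- \leq p^+ < \infty$ so that $c(\varepsilon) > 0$. Next, to obtain $\rho(u_k) \to \rho(u)$, I would invoke the pointwise bound $\bigl||a|^{p(x)} - |b|^{p(x)}\bigr| \leq p^+ (|a|^{p(x)-1} + |b|^{p(x)-1}) |a-b|$, integrate, apply the generalized H\"older inequality (Proposition 2.1 i)) to each of the two resulting pieces, and use that $|u_k - u|_{p(\cdot)} \to 0$ (already known from the equivalence (1) $\Leftrightarrow$ (2)) while the norms $|u_k|_{p(\cdot)}$, $|u|_{p(\cdot)}$ stay bounded.

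For the harder direction (3) $\Rightarrow$ (2), I would use a Brezis--Lieb type trick based on the convexity estimate $|a - b|^{p(x)} \leq 2^{p^+ - 1}(|a|^{p(x)} + |b|^{p(x)})$. Setting
\[
h_k(x) := 2^{p^+ - 1}\bigl(|u_k(x)|^{p(x)} + |u(x)|^{p(x)}\bigr) - |u_k(x) - u(x)|^{p(x)} \geq 0,
\]
the hypothesis (3) combined with the fact that convergence in measure yields pointwise a.e.\ convergence along a subsequence implies $h_k \to 2^{p^+} |u(x)|^{p(x)}$ a.e.\ along that subsequence. Fatou's lemma then gives
\[
2^{p^+} \rho(u) \leq \liminf_k \int_{\mathbb{R}^N} h_k\, dx = 2^{p^+} \rho(u) - \limsup_k \rho(u_k - u),
\]
where the equality uses $\rho(u_k) \to \rho(u)$. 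Consequently $\limsup_k \rho(u_k - u) \leq 0$ along the subsequence; the standard ``subsequence of any subsequence'' principle upgrades this to $\rho(u_k - u) \to 0$ along the full sequence.

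The main obstacle is the direction (3) $\Rightarrow$ (2): convergence in measure alone is insufficient, since mass could escape to infinity or concentrate, and one must exploit the prescribed convergence $\rho(u_k) \to \rho(u)$ as a normalization that precludes any such loss of mass. The variable nature of $p(x)$ only mildly complicates the usual constant-exponent Brezis--Lieb argument, since bounding $p(x)$ uniformly by $p^+$ in the convexity inequality keeps the estimates clean and integrable.
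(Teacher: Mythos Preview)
The paper does not supply its own proof of this proposition; it is quoted from \cite{e12} as a known background fact, so there is nothing in the paper to compare your argument against. That said, your outline is correct and standard: the equivalence $(1)\Leftrightarrow(2)$ is exactly Proposition~2.3\,iii); your Chebyshev bound gives convergence in measure; your mean-value/H\"older estimate gives $\rho(u_k)\to\rho(u)$; and the Fatou/Brezis--Lieb computation with $h_k\ge 0$ together with the subsequence principle handles $(3)\Rightarrow(2)$. One small cosmetic point: the inequality $\bigl||a|^{p(x)}-|b|^{p(x)}\bigr|\le p^{+}(|a|^{p(x)-1}+|b|^{p(x)-1})|a-b|$ as written needs an extra constant $\max(1,2^{p^{+}-2})$ when $p(x)\ge 2$, but this does not affect the argument. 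Note also that everything implicitly uses $p^{+}<\infty$, which is built into the definition of the modular $\rho$ and is the standing hypothesis in \cite{e12}.
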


Now we introduce the variable exponent Sobolev space $W^{1,p(\cdot )}(
\mathbb{R}^{N})$. The space $W^{1,p(\cdot )}(
\mathbb{R}^{N})$ is defined through
\begin{equation*}
W^{1,p(\cdot )}(\mathbb{R}^{N})=\left\{ u\in L^{p(\cdot )}\left(
\mathbb{R}^{N}\right) \left\vert \nabla u\in (L^{p(\cdot )}\left(
\mathbb{R}^{N}\right) )^{N}\right. \right\} ,
\end{equation*}
equipped with the following norm
\begin{equation*}
\left\Vert u\right\Vert _{p(\cdot )}=\left\vert u\right\vert _{p(\cdot
)}+\left\vert \nabla u\right\vert _{p(\cdot )},\forall u\in W^{1,p(\cdot
)}\left(\mathbb{R}^{N}\right) .
\end{equation*}

Adapted to our specific problem here, we introduce the working space $X$ by
\begin{equation*}
X=\left\{ u\in W^{1,p(\cdot )}(
\mathbb{R}
^{N})\left\vert \int_{
\mathbb{R}^{N}}\left\vert \nabla u(x)\right\vert ^{p(x)}+V(x)\left\vert
u(x)\right\vert ^{p(x)}dx<\infty \right. \right\} ,
\end{equation*}
and it can be equipped with the norm
\begin{equation*}
\left\Vert u\right\Vert =\inf \left\{ \lambda >0\left\vert \int_{\mathbb{R}^{N}}\left\vert \frac{\nabla u(x)}{\lambda }\right\vert
^{p(x)}+V(x)\left\vert \frac{u(x)}{\lambda }\right\vert ^{p(x)}dx\leq
1\right. \right\} .
\end{equation*}
Obviously, $X$ is a closed linear subspace of $W^{1,p(\cdot )}(\mathbb{R}^{N})$.  

Concerning the space $X$, we also have the following proposition.

\begin{proposition}\label{prop2.5}
(see \cite{jj1} Lemma 2.6). i) $X$ is a separable
reflexive Banach spaces;

ii) If $q\in C_{+}\left(\mathbb{R}^{N}\right) $ and $p(x)\leq q(x)<<p^{\ast }(x)$ for any $x\in
\mathbb{R}^{N}$, then the embedding from $X$ to $L^{q(\cdot )}\left(\mathbb{R}^{N}\right) $ is compact and continuous.
\end{proposition}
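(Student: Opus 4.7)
For part (i), the plan is to realize $X$ as an isometric closed subspace of a space that is manifestly separable and reflexive. Concretely, I would introduce the weighted modular space $L^{p(\cdot)}_V(\mathbb{R}^N)$ consisting of measurable $w$ with $\int_{\mathbb{R}^N} V(x)|w|^{p(x)}\,dx<\infty$, endowed with its Luxemburg norm. Under the hypothesis $(V)$ (so that $V$ is a.e.\ positive, locally bounded, and the modular is convex with $1<p^{-}\leq p^{+}<\infty$), this space enjoys the same separability/reflexivity as the unweighted variable exponent Lebesgue spaces. Then the map $T:X\to L^{p(\cdot)}(\mathbb{R}^N)^{N}\times L^{p(\cdot)}_{V}(\mathbb{R}^N)$ defined by $Tu=(\nabla u,u)$ is an isometry onto its image. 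Closedness of the image follows by a standard distributional argument: if $Tu_n\to(v,w)$ then, using $V\geq V_0>0$ to deduce $u_n\to w$ also in $L^{p(\cdot)}(\mathbb{R}^N)$, taking limits in the weak derivative identity yields $v=\nabla w$, hence $(v,w)=Tw$ with $w\in X$. Since closed subspaces of separable reflexive Banach spaces are again separable and reflexive, $X$ inherits both properties.

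For the continuity part of (ii), the bound $V\geq V_0>0$ gives $\|u\|_{p(\cdot)}\leq C\|u\|$, so $X\hookrightarrow W^{1,p(\cdot)}(\mathbb{R}^N)$ continuously. The Sobolev-type embedding $W^{1,p(\cdot)}(\mathbb{R}^N)\hookrightarrow L^{q(\cdot)}(\mathbb{R}^N)$ for $p\leq q\leq p^{\ast}$ (valid under assumption $(p)$) then yields continuity of $X\hookrightarrow L^{q(\cdot)}(\mathbb{R}^N)$.

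The heart of the statement is the compactness. Given a bounded sequence $u_n\in X$, after passing to a subsequence I may assume $u_n\rightharpoonup u$ in $X$, and by linearity it suffices to treat $u=0$. I split the analysis into a local part and a tail part. For the local part, on each ball $B_R$, since $X\hookrightarrow W^{1,p(\cdot)}(B_R)$ and the Rellich-Kondrachov theorem for variable exponent Sobolev spaces on bounded domains furnishes a compact embedding $W^{1,p(\cdot)}(B_R)\hookrightarrow L^{q(\cdot)}(B_R)$ whenever $q<<p^{\ast}$, I obtain $u_n\to 0$ strongly in $L^{q(\cdot)}(B_R)$. For the tail part, I exploit $(V)$: given $\varepsilon>0$, choose $R$ so large that $V(x)\geq 1/\varepsilon$ on $\{|x|>R\}$. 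Then
\[
\int_{|x|>R}|u_n|^{p(x)}\,dx\leq \varepsilon\int_{\mathbb{R}^N}V(x)|u_n|^{p(x)}\,dx\leq C\varepsilon,
\]
which, via Proposition 2.3, forces $|u_n|_{p(\cdot),\,|x|>R}$ to be small uniformly in $n$.

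To upgrade this tail control from exponent $p(\cdot)$ to the target exponent $q(\cdot)$, I interpolate against the critical Sobolev bound. Since $q<<p^{\ast}$, there is a measurable $\theta(x)\in[\theta_0,1]$ with $\theta_0>0$ such that $q(x)=\theta(x)p(x)+(1-\theta(x))p^{\ast}(x)$, and writing $|u_n|^{q(x)}=|u_n|^{\theta(x)p(x)}|u_n|^{(1-\theta(x))p^{\ast}(x)}$ and applying the variable exponent H\"older inequality (Proposition 2.1) together with the uniform $L^{p^{\ast}(\cdot)}$ bound yielded by the Sobolev embedding produces an estimate of the form $|u_n|_{q(\cdot),\,|x|>R}\leq C|u_n|_{p(\cdot),\,|x|>R}^{\theta_0}$, which tends to zero as $R\to\infty$ uniformly in $n$. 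Combining the local and tail parts gives $u_n\to 0$ in $L^{q(\cdot)}(\mathbb{R}^N)$. The main technical obstacle will be this last step: keeping track of the variable interpolation exponent and passing correctly between modular and norm via Proposition 2.3 to convert smallness of the $p$-modular on the tail into smallness of the $q$-norm, since the loss of an absolute power in $\theta(x)$ is what makes the condition $q<<p^{\ast}$ (rather than merely $q\leq p^{\ast}$) essential.
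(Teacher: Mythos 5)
The paper itself offers no argument for this proposition: it is quoted from Lemma 2.6 of \cite{jj1}, so there is no in-paper proof to compare against. Your reconstruction is the standard one (and essentially the one in the cited source and in its constant-exponent ancestors): for (i), identify $X$ with a closed subspace of a product of separable reflexive modular spaces via $u\mapsto(\nabla u,u)$; for (ii), get continuity from $V\geq V_0>0$ plus the Sobolev embedding, and compactness by combining the local compact embedding $W^{1,p(\cdot)}(B_R)\hookrightarrow L^{q(\cdot)}(B_R)$ with a tail estimate on $\{|x|>R\}$ driven by $V(x)\to\infty$, upgraded from exponent $p(\cdot)$ to $q(\cdot)$ by H\"older/interpolation against a critical bound. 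This is the right strategy and I regard it as correct in substance; one cosmetic point is that $T$ is an isomorphism onto a closed subspace with equivalent norms rather than an isometry (the Luxemburg norm of the sum modular is only equivalent to the sum of the component norms), which is all the argument needs.

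Two steps deserve more care than your sketch gives them. First, the interpolation on the tail silently uses a uniform bound for $u_n$ in $L^{p^{\ast}(\cdot)}(\mathbb{R}^{N})$, i.e.\ the critical Sobolev embedding $W^{1,p(\cdot)}(\mathbb{R}^{N})\hookrightarrow L^{p^{\ast}(\cdot)}(\mathbb{R}^{N})$ on the \emph{whole space}; under assumption $(p)$ (only $C^{1}$ with $|\nabla p|\in L^{\infty}$, hence locally log-H\"older with uniform constant but with no decay condition at infinity) this is not an off-the-shelf citation and should be justified, e.g.\ by patching uniform local estimates over a decomposition of $\mathbb{R}^{N}$ into unit cubes, as is done in the literature for Lipschitz exponents. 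Second, your lower bound $\theta(x)\geq\theta_{0}>0$ for $\theta(x)=(p^{\ast}(x)-q(x))/(p^{\ast}(x)-p(x))$ requires $p$ to stay bounded away from $N$ (equivalently $\sup p<N$, as assumed in \cite{jj1}); on any region where $p(x)\geq N$ one has $p^{\ast}(x)=\infty$ and the convex-combination identity $q=\theta p+(1-\theta)p^{\ast}$ degenerates, so that region must be treated separately (or the proposition read with the same restriction as the cited lemma). With these two points addressed, and the modular-versus-norm bookkeeping via Proposition 2.3 carried out as you indicate, the proof goes through.
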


Next we give some results related to the $p(x)$-Laplacian $\bigtriangleup _{p(x)}$. Consider
the following functional
\begin{equation*}
J(u)=\int_{\mathbb{R}^{N}}\frac{1}{p(x)}(\left\vert \nabla u\right\vert ^{p(x)}+V(x)\left\vert
u\right\vert ^{p(x)})dx,\text{ }u\in X.
\end{equation*}
Obviously (see \cite{e39}), $J\in C^{1}(X,\mathbb{R}),$ and the operator $-\Delta_{p(x)}\cdot +V(x)|\cdot|^{p(x)-2}\cdot$ is the derivative operator of $J$ in the weak sense. We
denote $L:=J^{^{\prime }}:X\rightarrow X^{\ast }$, then
\begin{equation*}
(L(u),v)=\int_{\mathbb{R}^{N}}(\left\vert \nabla u\right\vert ^{p(x)-2}\nabla u\nabla
v+V(x)\left\vert u\right\vert ^{p(x)-2}uv)dx\text{, }\forall u,v\in X.
\end{equation*}

On the properties of the derivative operator $L$, we have the following proposition which will be useful in Sections 3.

\begin{proposition}
(see \cite{e12}, \cite{e17}). i) $L:X\rightarrow
X^{\ast }$ is a continuous, bounded and strictly monotone operator;

ii) $L$ is a mapping of type $(S_{+})$, i.e., if $u_{n}\rightharpoonup u$ in
$X$ and $\underset{n\rightarrow +\infty }{\overline{\lim }}$ $
(L(u_{n})-L(u),u_{n}-u)\leq 0$, then $u_{n}\rightarrow u$ in $X$;

iii) $L:X\rightarrow X^{*}$ is a homeomorphism.
\end{proposition}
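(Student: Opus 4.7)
The plan is to address the three items in sequence, with the third following from the first two together with a coercivity argument.

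For part (i), I would first establish continuity and boundedness by direct estimation: for $u,v\in X$, split the integrand defining $(L(u),v)$ into the gradient and potential pieces, and apply the H\"older inequality of Proposition 2.1 with conjugate exponent $q(x)=p(x)/(p(x)-1)$ to obtain
\[
|(L(u),v)|\le C\bigl(\bigl||\nabla u|^{p(\cdot)-1}\bigr|_{q(\cdot)}+\bigl|V^{(p(\cdot)-1)/p(\cdot)}|u|^{p(\cdot)-1}\bigr|_{q(\cdot)}\bigr)\,\|v\|.
\]
Using Proposition 2.3 to pass between norm and modular yields $\|L(u)\|_{X^{*}}\le C(\|u\|^{p^{-}-1}+\|u\|^{p^{+}-1})$, hence boundedness. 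Continuity then follows because the map $u\mapsto |\nabla u|^{p(x)-2}\nabla u$ is the Nemytsky operator from $(L^{p(\cdot)})^{N}$ into $(L^{q(\cdot)})^{N}$ (and similarly for the potential piece), and Proposition 2.2 gives continuity there; together with Proposition 2.4, convergence in norm in $X$ transfers to norm convergence of the images in $X^{*}$. Strict monotonicity is the classical pointwise inequality
\[
(|a|^{p-2}a-|b|^{p-2}b)\cdot (a-b)>0\quad\text{whenever }a\ne b,\ p>1,
\]
applied pointwise with $p=p(x)$ to both the gradient and potential integrands; since $V(x)\ge V_{0}>0$, the sum is strictly positive unless $u=v$ a.e.

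For part (ii), suppose $u_{n}\rightharpoonup u$ in $X$ with $\limsup(L(u_{n})-L(u),u_{n}-u)\le 0$. By strict monotonicity the integrand in $(L(u_{n})-L(u),u_{n}-u)$ is nonnegative pointwise, so in fact the limit equals $0$ and the pointwise expression tends to $0$ in $L^{1}(\mathbb{R}^{N})$. Now split $\mathbb{R}^{N}$ into the regions $\Omega_{+}=\{p(x)\ge 2\}$ and $\Omega_{-}=\{p(x)<2\}$. On $\Omega_{+}$ use the inequality $(|a|^{p-2}a-|b|^{p-2}b)\cdot(a-b)\ge c_{p}|a-b|^{p}$, and on $\Omega_{-}$ use $|a-b|^{p}\le C_{p}\bigl[(|a|^{p-2}a-|b|^{p-2}b)\cdot(a-b)\bigr]^{p/2}(|a|^{p}+|b|^{p})^{(2-p)/2}$, followed by H\"older with exponents $2/p$ and $2/(2-p)$ (the second factor is bounded because $u_{n}$ is bounded in $X$). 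Adding the gradient and potential contributions and invoking Proposition 2.3 to pass from the modular $\rho(\nabla(u_{n}-u))+\int V|u_{n}-u|^{p(x)}dx\to 0$ back to $\|u_{n}-u\|\to 0$ gives the claim. I expect this to be the \emph{main technical obstacle}, since the variable exponent forces a region splitting and careful control of the auxiliary $(|a|^{p}+|b|^{p})^{(2-p)/2}$ factor.

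For part (iii), I need $L$ bijective with continuous inverse. Injectivity is immediate from strict monotonicity. For surjectivity I will invoke the Browder--Minty theorem: $L$ is monotone, demicontinuous and coercive, the last following from
\[
\frac{(L(u),u)}{\|u\|}=\frac{\int(|\nabla u|^{p(x)}+V|u|^{p(x)})\,dx}{\|u\|}\ge \|u\|^{p^{-}-1}-1\quad\text{for }\|u\|\ge 1
\]
via Proposition 2.3, so $L(X)=X^{*}$. For continuity of $L^{-1}$, take $L(u_{n})\to L(u)$ in $X^{*}$; coercivity forces $\{u_{n}\}$ bounded, so along a subsequence $u_{n}\rightharpoonup w$ in the reflexive space $X$ (Proposition 2.5(i)). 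Then
\[
(L(u_{n})-L(w),u_{n}-w)=(L(u_{n})-L(u),u_{n}-w)+(L(u),u_{n}-w)\longrightarrow 0,
\]
so by part (ii) $u_{n}\to w$ strongly, continuity of $L$ gives $L(w)=L(u)$, and injectivity forces $w=u$. A standard subsequence argument upgrades this to convergence of the full sequence.
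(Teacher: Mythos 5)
The paper does not actually prove this proposition: it is quoted verbatim from the literature (Fan--Zhang \cite{e12} and El Hamidi \cite{e17}), so there is no internal argument to compare yours against. Your sketch reconstructs, essentially correctly, the standard proof found in those references: H\"older plus the modular--norm relations for boundedness, Nemytsky continuity for continuity, the elementary vector inequalities for strict monotonicity, the $\{p(x)\geq 2\}$ / $\{p(x)<2\}$ splitting for the $(S_{+})$ property, and Browder--Minty together with $(S_{+})$ for the homeomorphism. Three small points deserve attention if you were to write this out in full. First, in part (ii) the H\"older step on $\{p(x)<2\}$ must be the \emph{variable-exponent} H\"older inequality with exponent pair $2/p(x)$ and $2/(2-p(x))$ (or, alternatively, a pointwise Young inequality with a small parameter), since the exponents are not constant; the conclusion then follows from Proposition 2.3/2.4 exactly as you say, using that the modular of the first factor tends to $0$ and that of the second stays bounded. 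Second, for the potential term in part (i) note that $V$ is only in $L^{\infty}_{loc}$ and unbounded at infinity, so the Nemytsky-operator argument should be applied after absorbing the weight, i.e.\ to $V^{1/p(x)}u$ and $V^{(p(x)-1)/p(x)}|u|^{p(x)-2}u$, which is what your factorization already suggests. Third, in part (iii) the displayed identity
\[
(L(u_{n})-L(w),u_{n}-w)=(L(u_{n})-L(u),u_{n}-w)+(L(u),u_{n}-w)
\]
has a slip: the second term should be $(L(u)-L(w),u_{n}-w)$, which tends to $0$ by the weak convergence $u_{n}\rightharpoonup w$, while the first tends to $0$ since $L(u_{n})\to L(u)$ in $X^{*}$ and $\{u_{n}-w\}$ is bounded; with that correction the appeal to $(S_{+})$ and the subsequence argument go through as you describe.
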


Now, we denote $B(x_{0},\varepsilon ,\delta ,\theta ):=\{x\in \mathbb{R} ^{N}\mid
\delta \leq \left\vert x-x_{0}\right\vert \leq \varepsilon ,\frac{x-x_{0}}{
\left\vert x-x_{0}\right\vert }\cdot \frac{\nabla p(x_{0})}{\left\vert
\nabla p(x_{0})\right\vert }\geq \cos \theta \}$, where $\theta \in (0,\frac{
\pi }{2})$. Then, we have the following geometrical proposition on the variable exponent $p(x)$. 

\begin{lemma}
(see \cite{zj3}) If $p\in C^{1}(\mathbb{R}^{N})$, $
x_{0}\in \mathbb{R}^{N}$ satisfy $\nabla p(x_{0})\neq 0$, then there exist a
positive $\varepsilon $ small enough such that
\begin{equation}
(x-x_{0})\cdot \nabla p(x)>0,\forall x\in B(x_{0},\varepsilon ,\delta
,\theta )\text{,}  \label{a3}
\end{equation}
and
\begin{equation}
\max \{p(x)\mid x\in \overline{B(x_{0},\varepsilon )}\}=\max \{p(x)\mid x\in
B(x_{0},\varepsilon ,\varepsilon ,\theta )\}.  \label{a4}
\end{equation}
\end{lemma}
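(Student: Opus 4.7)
The plan is to exploit the fact that, since $\nabla p(x_0)\neq 0$ and $p\in C^1$, in a sufficiently small neighborhood of $x_0$ the function $p$ is uniformly well approximated by the affine map $x\mapsto p(x_0)+\nabla p(x_0)\cdot(x-x_0)$, whose behavior on a ball is completely transparent. Write $e:=\nabla p(x_0)/|\nabla p(x_0)|$ and $c_0:=|\nabla p(x_0)|>0$. By continuity of $\nabla p$ at $x_0$, for any prescribed $\eta>0$ I can fix $\varepsilon_0>0$ with $|\nabla p(x)-\nabla p(x_0)|<\eta$ for all $x\in\overline{B(x_0,\varepsilon_0)}$; every estimate below is intended for $\varepsilon\leq\varepsilon_0$, with $\eta$ ultimately taken small relative to $c_0\cos\theta$ and $c_0(1-\cos\theta)$.

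For (a3), the straightforward decomposition $(x-x_0)\cdot\nabla p(x)=(x-x_0)\cdot\nabla p(x_0)+(x-x_0)\cdot(\nabla p(x)-\nabla p(x_0))$ combined with the cone condition in the definition of $B(x_0,\varepsilon,\delta,\theta)$ yields a first term $\geq c_0\cos\theta\,|x-x_0|$ and an error term $\geq -\eta|x-x_0|$; choosing $\eta<c_0\cos\theta$ gives $(x-x_0)\cdot\nabla p(x)\geq(c_0\cos\theta-\eta)|x-x_0|\geq(c_0\cos\theta-\eta)\delta>0$.

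For (a4), since $B(x_0,\varepsilon,\varepsilon,\theta)\subset\overline{B(x_0,\varepsilon)}$ one inequality is automatic; what remains is to dominate $p(y)$ for an arbitrary $y\in\overline{B(x_0,\varepsilon)}$ by $p(y^\sharp)$ for some $y^\sharp$ in the spherical cap. Writing $y=x_0+rv$ with $v$ a unit vector and $r\in[0,\varepsilon]$, and putting $t:=v\cdot e$, I split into two cases. If $t\geq\cos\theta$, I push $y$ radially out to $y^\sharp:=x_0+\varepsilon v$, which still lies in the cap, and invoke essentially (a3) along the radial segment: $p(y^\sharp)-p(y)=\int_r^{\varepsilon}\nabla p(x_0+sv)\cdot v\,ds\geq(c_0\cos\theta-\eta)(\varepsilon-r)\geq 0$. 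If $t<\cos\theta$, I compare instead with the axis point $y^\sharp:=x_0+\varepsilon e$; the Taylor-type estimate $p(x)=p(x_0)+\nabla p(x_0)\cdot(x-x_0)+R(x)$ with $|R(x)|\leq\eta|x-x_0|\leq\eta\varepsilon$, together with the elementary bound $rt\leq\varepsilon\cos\theta$ valid whether $t\geq 0$ or $t<0$, gives $p(y^\sharp)-p(y)\geq c_0\varepsilon(1-\cos\theta)-2\eta\varepsilon$, which is strictly positive as soon as $\eta<c_0(1-\cos\theta)/2$.

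Choosing $\eta$ below both thresholds and then taking the corresponding $\varepsilon=\varepsilon_0$ yields (a3) and (a4) simultaneously. The only mildly delicate point I foresee is the subcase $t\geq\cos\theta$ of (a4): a direct Taylor comparison against the axis point $x_0+\varepsilon e$ is not sharp enough when $y$ sits deep inside the cone and close to the outer sphere, so one must instead pass to the radial push-out $y^\sharp=x_0+\varepsilon v$ and again exploit the positivity of $(x-x_0)\cdot\nabla p(x)$ that drives (a3). Beyond this bookkeeping, the whole argument is simply the quantitative observation that a nonvanishing gradient at $x_0$ forces the supremum of $p$ on small balls around $x_0$ to concentrate near the ray $\{x_0+se:s\geq 0\}$.
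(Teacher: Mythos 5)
Your proof is correct and follows essentially the same route as the paper: expand $\nabla p$ to first order at $x_{0}$ to get (\ref{a3}) from the cone condition, and obtain (\ref{a4}) by comparing points outside the cone with the axis point $x_{0}+\varepsilon \nabla p(x_{0})/\left\vert \nabla p(x_{0})\right\vert$ while exploiting radial monotonicity of $p$ inside the cone. The only differences are cosmetic: you carry explicit $\eta$--$\varepsilon$ constants and push cone points radially to the cap by an integral along the segment (which lets you dispense with the intermediate step and the parameter $\delta$), whereas the paper uses $o(1)$ bookkeeping, the mean value theorem, and the auxiliary equality (\ref{d1}) before invoking (\ref{a3}).
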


\begin{proof} The proof can be found in \cite{zj3}. We reproduce it here for completeness and for the readers'
convenience.

Since $p\in C^{1}(\mathbb{R}^{N})$, for any $x\in B(x_{0},\varepsilon
,\delta ,\theta )$, when $\varepsilon $ is small enough, it is easy to see
that
\begin{eqnarray*}
\nabla p(x)\cdot (x-x_{0}) &=&(\nabla p(x_{0})+o(1))\cdot (x-x_{0}) \\
&=&\nabla p(x_{0})\cdot (x-x_{0})+o(\left\vert x-x_{0}\right\vert ) \\
&\geq &\left\vert \nabla p(x_{0})\right\vert \left\vert x-x_{0}\right\vert
\cos \theta +o(\left\vert x-x_{0}\right\vert )>0,
\end{eqnarray*}
where $o(1)\in
\mathbb{R}^{N}$ is a function and $o(1)\rightarrow 0$ uniformly as $\left\vert
x-x_{0}\right\vert \rightarrow 0$.

When $\varepsilon $ is small enough, (\ref{a3}) is valid. Since $p\in C^{1}(
\mathbb{R}^{N})$, there exist a small enough positive $\varepsilon $ such
that
\begin{equation*}
p(x)-p(x_{0})=\nabla p(y)\cdot (x-x_{0})=(\nabla p(x_{0})+o(1))\cdot
(x-x_{0}),\text{ }
\end{equation*}
where $y=x_{0}+\tau (x-x_{0})$ and $\tau \in (0,1)$, $o(1)\in\mathbb{R}^{N}$ is a function and $o(1)\rightarrow 0$ uniformly as $\left\vert
x-x_{0}\right\vert \rightarrow 0$.

Suppose $x\in \overline{B(x_{0},\varepsilon )}\backslash B(x_{0},\varepsilon
,\delta ,\theta )$. Denote $x^{\ast }=x_{0}+\varepsilon \nabla
p(x_{0})/\left\vert \nabla p(x_{0})\right\vert $.

Suppose $\frac{x-x_{0}}{\left\vert x-x_{0}\right\vert }\cdot \frac{\nabla
p(x_{0})}{\left\vert \nabla p(x_{0})\right\vert }<\cos \theta $. When $
\varepsilon $ is small enough, we have
\begin{eqnarray*}
p(x)-p(x_{0}) &=&(\nabla p(x_{0})+o(1))\cdot (x-x_{0}) \\
&<&\left\vert \nabla p(x_{0})\right\vert \left\vert x-x_{0}\right\vert \cos
\theta +o(\varepsilon ) \\
&\leq &(\nabla p(x_{0})+o(1))\cdot \varepsilon \nabla p(x_{0})/\left\vert
\nabla p(x_{0})\right\vert \\
&=&p(x^{\ast })-p(x_{0}),
\end{eqnarray*}
where $o(1)\in\mathbb{R}^N$ is a function and $o(1)\rightarrow 0$ as $\varepsilon \rightarrow 0$.

Suppose $\left\vert x-x_{0}\right\vert <\delta $. When $\varepsilon $ is
small enough, we have
\begin{eqnarray*}
p(x)-p(x_{0}) &=&(\nabla p(x_{0})+o(1))\cdot (x-x_{0}) \\
&\leq &\left\vert \nabla p(x_{0})\right\vert \left\vert x-x_{0}\right\vert
+o(\varepsilon ) \\
&<&(\nabla p(x_{0})+o(1))\cdot \varepsilon \nabla p(x_{0})/\left\vert \nabla
p(x_{0})\right\vert \\
&=&p(x^{\ast })-p(x_{0}),
\end{eqnarray*}
where $o(1)\in\mathbb{R}^{N}$ is a function and $o(1)\rightarrow 0$ as $\varepsilon \rightarrow 0$.
Thus, we have
\begin{equation}
\max \{p(x)\mid x\in \overline{B(x_{0},\varepsilon )}\}=\max \{p(x)\mid x\in
B(x_{0},\varepsilon ,\delta ,\theta )\}.  \label{d1}
\end{equation}

It follows from (\ref{a3}) and (\ref{d1}) that (\ref{a4}) is valid. The proof of
Lemma 2.8 is completed.
\end{proof}

Next, we give a lemma which will also be used in Section 3.

\begin{lemma}
Suppose that the function $F(x,u)$ satisfies the following far-field growth condition with respect to its second argument $u$:
\begin{equation*}
C_{1}\left\vert u\right\vert ^{p(x)}[\ln (e+\left\vert u\right\vert
)]^{a(x)}\leq F(x,u),\forall \left\vert u\right\vert \geq M,\forall x\in
\mathbb{R}^{N},
\end{equation*}
where $a(x)>>p(x)$, and $x_{0}\in \mathbb{R}^{N}$ with $\nabla p(x_{0})\neq
0 $. Let
\begin{equation*}
h(x)=\left\{
\begin{array}{cc}
0, & \left\vert x-x_{0}\right\vert >\varepsilon \\
\varepsilon -\left\vert x-x_{0}\right\vert , & \left\vert x-x_{0}\right\vert
\leq \varepsilon
\end{array}
\right. ,
\end{equation*}
where $\varepsilon $ is defined in Lemma 2.7, then there holds
\begin{equation*}
\varphi (th)=\int_{\mathbb R^N }\frac{1}{p(x)}(\left\vert \nabla th\right\vert
^{p(x)}+V(x)\left\vert th\right\vert ^{p(x)})dx-\int_{\mathbb R^N
}F(x,th)dx\rightarrow -\infty \text{ as }t\rightarrow +\infty .
\end{equation*}
\end{lemma}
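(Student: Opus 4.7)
My plan is to decompose $\varphi(th)=S(th)-\int_{\mathbb{R}^N}F(x,th)\,dx$, where $S(th):=\int_{\mathbb{R}^N}\frac{1}{p(x)}(|\nabla(th)|^{p(x)}+V(x)|th|^{p(x)})\,dx$, and both integrands vanish outside $\overline{B(x_0,\varepsilon)}$. Using $|\nabla h|\leq 1$, $h\leq\varepsilon$, $V\in L^\infty_{\mathrm{loc}}(\mathbb{R}^N)$, and $t^{p(x)}\leq t^{p^+_\varepsilon}$ for $t\geq 1$ (with $p^+_\varepsilon:=\max_{\overline{B(x_0,\varepsilon)}}p$, finite by continuity of $p$), one immediately gets the upper bound $S(th)\leq C_0 t^{p^+_\varepsilon}$ for all $t\geq 1$. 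The entire proof then reduces to showing that $\int F(x,th)\,dx$ grows strictly faster in $t$ than $t^{p^+_\varepsilon}$.

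To produce such a lower bound, I would invoke Lemma 2.7 to locate a point $x^*$ on the sphere $|x-x_0|=\varepsilon$ in the cone direction of $\nabla p(x_0)$ at which $p(x^*)=p^+_\varepsilon$. Although $h(x^*)=0$, the hypothesis $a\gg p$ together with the continuity of $p$ and $a$ produces $\sigma>0$ and a neighborhood $U$ of $x^*$ with $a(x)\geq p^+_\varepsilon+\sigma$ on $U$. To compensate for the vanishing of $h$ at $x^*$, localize on a $t$-dependent thin shell
\[
D_t:=\{x\in U\cap B(x_0,\varepsilon):R(t)\leq\varepsilon-|x-x_0|\leq 2R(t)\}
\]
with $R(t)=1/\ln t$. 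On $D_t$ one has $t\,h(x)\geq t/\ln t\to\infty$, so the growth hypothesis applies; after shrinking the angular aperture of $U$ if necessary, one has also $p(x)\geq p^+_\varepsilon-C/\ln t$ and $a(x)\geq p^+_\varepsilon+\sigma/2$ on $D_t$ for $t$ large. Inserting these estimates into the hypothesis, together with the cancellations $t^{p(x)}\gtrsim t^{p^+_\varepsilon}$, $h^{p(x)}\gtrsim(\ln t)^{-p^+_\varepsilon}$, and $[\ln(e+th)]^{a(x)}\gtrsim(\ln t)^{p^+_\varepsilon+\sigma/2}$, yields
\[
\int_{D_t}F(x,th)\,dx\gtrsim t^{p^+_\varepsilon}(\ln t)^{\sigma/2}\,|D_t|.
\]

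The main obstacle is that the shell volume $|D_t|$ is itself only a power of $1/\ln t$, so the naive estimate above yields only $t^{p^+_\varepsilon}(\ln t)^{\sigma/2-N}$ (or similar), which does not immediately dominate $C_0 t^{p^+_\varepsilon}$ unless $\sigma$ is large. I expect this gap to be closed by replacing the crude upper bound in the first step with a matching Laplace-type estimate $S(th)\leq Ct^{p^+_\varepsilon}(\ln t)^{-\kappa}$ (where $\kappa>0$ is dictated by $N$ and by the linear vanishing rate of $p^+_\varepsilon-p(x)$ near $x^*$), and by symmetrically refining the lower bound on $\int F$, so that the ratio $\int F/S$ ultimately grows like $(\ln t)^{a(x^*)-p^+_\varepsilon}\to\infty$. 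The essential ingredient making the whole balance work is the strict inequality $a(x^*)>p^+_\varepsilon$ provided by $a\gg p$ and continuity; without this logarithmic gap, the polylog losses coming from the vanishing $h$, the shrinking $|D_t|$, and the decrease of $p(x)$ would exactly counteract the gain from $[\ln(e+th)]^{a(x)}$.
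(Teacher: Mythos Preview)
Your strategy is sound, and the mechanism you isolate --- that the strict gap $a(x)-p(x)\ge\epsilon_1>0$ is what makes the $F$-integral beat the kinetic term by a factor $(\ln t)^{\epsilon_1}$ --- is exactly right. You also correctly diagnose why the crude bound $S(th)\le C_0\,t^{p^+_\varepsilon}$ paired with a single-point localization leaves a polylog deficit, and that a matched Laplace-type upper bound on $S$ is the cure.

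The paper's execution, however, sidesteps this difficulty by never comparing to the scalar $t^{p^+_\varepsilon}$ at all. Working in spherical coordinates $(r,\omega)$ on the cone region $B(x_0,\varepsilon,\delta,\theta)$ of Lemma~2.7, it uses the two-sided linear control $p(\varepsilon,\omega)-c_2(\varepsilon-r)\le p(r,\omega)\le p(\varepsilon,\omega)-c_1(\varepsilon-r)$ and carries out the radial integral explicitly: $\int_\delta^\varepsilon t^{-c_1(\varepsilon-r)}\,dr\sim (c_1\ln t)^{-1}$. This gives
\[
S(th)\;\le\; C\int_{\text{angular}}\frac{t^{p(\varepsilon,\omega)}}{\ln t}\,d\omega,
\]
while for the $F$-term the radial integration is restricted to $[\delta,\varepsilon-1/\ln t]$ (so that $th\ge t/\ln t$, exactly your idea for $D_t$), producing
\[
\int_{\mathbb{R}^N} C_1|th|^{p(x)}[\ln(e+|th|)]^{a(x)}\,dx\;\ge\; C'(\ln t)^{\epsilon_1}\int_{\text{angular}}\frac{t^{p(\varepsilon,\omega)}}{\ln t}\,d\omega.
\]
The angular integral is a common factor and cancels, leaving the direct comparison $\Psi(th)\le\bigl[C-C'(\ln t)^{\epsilon_1}\bigr]\cdot(\text{positive})\to-\infty$. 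In effect the paper performs your ``refined Laplace estimate'' and your lower bound simultaneously, in integrated form over \emph{all} angular directions in the cone, so no localization at a single $x^\ast$ is needed and the volume losses you were tracking never arise. Your plan would close once the Laplace asymptotics are written out, but the angular-integral comparison is shorter and avoids the bookkeeping.
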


\begin{proof} We only need to prove that
\begin{align*}
\Psi (th)&:=\int_{\mathbb R^N }\frac{1}{p(x)}(\left\vert \nabla th\right\vert
^{p(x)}+V(x)\left\vert th\right\vert ^{p(x)})\,dx-\int_{\mathbb R^N }C_{1}\left\vert th\right\vert ^{p(x)}[\ln
(e+\left\vert th\right\vert )]^{a(x)}dx\\&
\rightarrow -\infty \text{ as }
t\rightarrow +\infty .
\end{align*}
Obviously, for $t$ sufficiently large, we have
\begin{equation*}
\int_{\mathbb R^N }\frac{1}{p(x)}\left\vert \nabla th\right\vert ^{p(x)}dx\leq
C_{2}\int_{B(x_{0},\varepsilon ,\delta ,\theta )}\left\vert \nabla
th\right\vert ^{p(x)}dx,
\end{equation*}
\begin{equation*}
\int_{\mathbb R^N }V(x)\left\vert th\right\vert ^{p(x)}dx\leq
C_{2}\int_{B(x_{0},\varepsilon ,\delta ,\theta )}V(x)\left\vert
th\right\vert ^{p(x)}dx,
\end{equation*}
\begin{equation*}
\int_{\mathbb R^N }C_{1}\left\vert th\right\vert ^{p(x)}[\ln (e+\left\vert
th\right\vert )]^{a(x)}dx\geq \int_{B(x_{0},\varepsilon ,\delta ,\theta
)}C_{1}\left\vert th\right\vert ^{p(x)}[\ln (e+\left\vert th\right\vert
)]^{a(x)}dx.
\end{equation*}
Next we shall use spherical coordinates. Denote $r=\left\vert
x-x_{0}\right\vert $. Since $p\in C^{1}(\mathbb{R}^{N})$, it follows from (\ref{a3}) that there exist positive constants $c_{1}$ and $c_{2}$ such that
\begin{equation*}
p(\varepsilon ,\omega )-c_{2}(\varepsilon -r)\leq p(r,\omega )\leq
p(\varepsilon ,\omega )-c_{1}(\varepsilon -r),\forall (r,\omega )\in
B(x_{0},\varepsilon ,\delta ,\theta ).
\end{equation*}
Therefore, we have
\begin{eqnarray} \label{a5}
\int_{B(x_{0},\varepsilon ,\delta ,\theta )}\left\vert \nabla th\right\vert
^{p(x)}dx &=&\int_{B(x_{0},\varepsilon ,\delta ,\theta )}t^{p(r,\omega
)}r^{N-1}drd\omega  \notag \\
&\leq &\int_{B(x_{0},\varepsilon ,\delta ,\theta )}t^{p(\varepsilon ,\omega
)-c_{1}(\varepsilon -r)}r^{N-1}drd\omega  \notag \\
&\leq &\varepsilon ^{N-1}\int_{B(x_{0},\varepsilon ,\delta ,\theta
)}t^{p(\varepsilon ,\omega )-c_{1}(\varepsilon -r)}drd\omega  \notag \\
&\leq &\varepsilon ^{N-1}\int_{B(x_{0},1,1,\theta )}\frac{t^{p(\varepsilon
,\omega )}}{c_{1}\ln t}d\omega.
\end{eqnarray}
Similarly, we have
\begin{eqnarray} \label{abc2}
\int_{B(x_{0},\varepsilon ,\delta ,\theta )}V(x)\left\vert th\right\vert
^{p(x)}dx &=&\int_{B(x_{0},\varepsilon ,\delta ,\theta )}V(r,\omega
)\left\vert t(\varepsilon -r)\right\vert ^{p(r,\omega )}r^{N-1}drd\omega
\notag \\
&\leq &c_{3}\int_{B(x_{0},\varepsilon ,\delta ,\theta )}\left\vert
t\right\vert ^{p(r,\omega )}drd\omega  \notag \\
&\leq &c_{3}\int_{B(x_{0},\varepsilon ,\delta ,\theta )}\left\vert
t\right\vert ^{p(\varepsilon ,\omega )-c_{1}(\varepsilon -r)}drd\omega
\notag \\
&=&c_{3}\int_{B(x_{0},1,1,\theta )}d\omega \int_{\delta }^{\varepsilon
}\left\vert t\right\vert ^{p(\varepsilon ,\omega )-c_{1}(\varepsilon -r)}dr
\notag \\
&\leq &c_{4}\int_{B(x_{0},1,1,\theta )}\frac{\left\vert t\right\vert
^{p(\varepsilon ,\omega )}}{\ln t}d\omega. 
\end{eqnarray}
Since $p\in C^{1}(\mathbb{R}^{N})$, and $a(\cdot )>>p(\cdot )$, when $
\varepsilon $ is small enough, there exists a $\epsilon _{1}>0$ such that
\begin{equation*}
a(x)\geq \max \{p(x)+\epsilon _{1}\mid x\in B(x_{0},\varepsilon ,\delta
,\theta )\},\forall x\in B(x_{0},\varepsilon ,\delta ,\theta ).
\end{equation*}
Hence, when $t$ is large enough, we have
\begin{eqnarray*}
&&\int_{B(x_{0},\varepsilon ,\delta ,\theta )}C_{1}\left\vert th\right\vert
^{p(x)}[\ln (e+\left\vert th\right\vert )]^{a(x)}dx \\
&=&\int_{B(x_{0},\varepsilon ,\delta ,\theta )}C_{1}\left\vert t(\varepsilon
-r)\right\vert ^{p(r,\omega )}r^{N-1}[\ln (e+\left\vert t(\varepsilon
-r)\right\vert )]^{a(r,\omega )}drd\omega \\
&\geq &C_{1}\delta ^{N-1}\int_{B(x_{0},\varepsilon ,\delta ,\theta
)}\left\vert t\right\vert ^{p(\varepsilon ,\omega )-c_{2}(\varepsilon
-r)}\left\vert \varepsilon -r\right\vert ^{p(\varepsilon ,\omega
)-c_{1}(\varepsilon -r)}[\ln (e+\left\vert t(\varepsilon -r)\right\vert
)]^{a(r,\omega )}drd\omega \\
&\geq &C_{1}\delta ^{N-1}\int_{B(x_{0},1,1,\theta )}d\omega \int_{\delta
}^{\varepsilon -\frac{1}{\ln t}}\left\vert t\right\vert ^{p(\varepsilon
,\omega )-c_{2}(\varepsilon -r)}\left\vert \varepsilon -r\right\vert
^{p(\varepsilon ,\omega )-c_{1}(\varepsilon -r)}[\ln (e+\left\vert
t(\varepsilon -r)\right\vert )]^{a(r,\omega )}dr \\
&\geq &C_{3}\delta ^{N-1}\int_{B(x_{0},1,1,\theta )}(\frac{1}{\ln t}
)^{p(\varepsilon ,\omega )}[\ln (e+\frac{t}{\ln t})]^{p(\varepsilon ,\omega
)+\epsilon _{1}}\int_{\delta }^{\varepsilon -\frac{1}{\ln t}}\left\vert
t\right\vert ^{p(\varepsilon ,\omega )-c_{2}(\varepsilon -r)}drd\omega \\
&\geq &C_{4}\delta ^{N-1}\int_{B(x_{0},1,1,\theta )}(\ln t)^{\epsilon _{1}}
\frac{\left\vert t\right\vert ^{p(\varepsilon ,\omega )-\frac{c_{2}}{\ln t}}
}{c_{2}\ln t}d\omega \\
&\geq &(\ln t)^{\epsilon _{1}}C_{5}\delta ^{N-1}\int_{B(x_{0},1,1,\theta )}
\frac{\left\vert t\right\vert ^{p(\varepsilon ,\omega )}}{c_{2}\ln t}d\omega.
\end{eqnarray*}
Thus, we have
\begin{equation}\label{a6}
\int_{B(x_{0},\varepsilon ,\delta ,\theta )}C_{1}\left\vert th\right\vert
^{p(x)}[\ln (e+\left\vert th\right\vert )]^{a(x)}dx\geq (\ln t)^{\epsilon
_{1}}C_{5}\int_{B(x_{0},1,1,\theta )}\frac{\left\vert t\right\vert
^{p(\varepsilon ,\omega )}}{\ln t}d\omega \text{ as }t\rightarrow +\infty .
\end{equation}
It follows from (\ref{a5}), (\ref{abc2}) and (\ref{a6}) that $\Psi
(th)\rightarrow -\infty $. The proof of Lemma 2.8 is completed.
\end{proof}

\section{Proofs of main results}

In this section, we will prove the main results. We first make clear the definitions of weak and strong solutions
to the problem $(P)$ respectively.

\begin{definition}\label{definition}
$(i)$ We call $u\in X$ is a weak solution of $(P)$ if
\begin{equation*}
\int_{\mathbb{R}^{N}}\left\vert \nabla u\right\vert ^{p(x)-2}\nabla u\cdot
\nabla v+V(x)\left\vert u\right\vert ^{p(x)-2}uvdx=\int_{\mathbb{R}
^{N}}f(x,u)vdx,\forall v\in X.
\end{equation*}

$(ii)$ We call $u\ $is a strong solution of $(P)$ if $u\in C^{1,\alpha }(\mathbb{
R}^{N})$ and it is a weak solution of $(P)$.
\end{definition}

The corresponding functional of $(P)$ is
\begin{equation*}
\varphi \left( u\right) =\int_{\mathbb{R}^{N}}\frac{1}{p(x)}(\left\vert
\nabla u\right\vert ^{p(x)}+V(x)\left\vert u\right\vert ^{p(x)})dx-\int_{
\mathbb{R}^{N}}F(x,u)dx,\forall u\in X.
\end{equation*}

\begin{definition}
We say $\varphi $ satisfies the Cerami condition in $X$,
if any sequence $\left\{ u_{n}\right\} \subset X$ such that $\left\{ \varphi
(u_{n})\right\} $ is bounded and $\left\Vert \varphi ^{\prime
}(u_{n})\right\Vert (1+\left\Vert u_{n}\right\Vert )\rightarrow 0$ as $
n\rightarrow +\infty $ has a strong convergent subsequence in $X$.
\end{definition}

\begin{lemma}\label{lemma3.3}
Suppose that the conditions (V), (p), (H$_{0}$)-(H$_{1}$)
hold, then $\varphi $ satisfies the Cerami condition in $X$.
\end{lemma}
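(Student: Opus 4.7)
The plan is to verify the Cerami condition in three steps: (a) show any Cerami sequence $\{u_n\}\subset X$ is bounded; (b) extract a weak limit and obtain strong convergence in the relevant Lebesgue spaces via the compact embedding in Proposition~\ref{prop2.5}(ii); (c) upgrade to strong convergence in $X$ through the $(S_+)$ property of the operator $L$ (Proposition 2.6(ii)). Fix a Cerami sequence, so that $|\varphi(u_n)|\le C$ and $(1+\|u_n\|)\|\varphi'(u_n)\|_{X^\ast}\to 0$; in particular $\langle\varphi'(u_n),u_n\rangle\to 0$.

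The heart of the argument, and the step I expect to be the main obstacle, is the boundedness in Step (a). I would argue by contradiction, assuming $\|u_n\|\to\infty$ along a subsequence, and feed the Cerami information into an algebraic combination of $\varphi(u_n)$ and $\langle\varphi'(u_n),u_n\rangle$, typically
\[
p^+\varphi(u_n)-\langle\varphi'(u_n),u_n\rangle=\int_{\mathbb R^N}\tfrac{p^+-p(x)}{p(x)}\bigl(|\nabla u_n|^{p(x)}+V(x)|u_n|^{p(x)}\bigr)\,dx+\int_{\mathbb R^N}\bigl(u_nf(x,u_n)-p^+F(x,u_n)\bigr)\,dx.
\]
The left-hand side is $O(1)$ and the first integrand on the right is pointwise non-negative. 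Condition $(H_1)$ plays a dual role here: on $\{|u_n|\ge M\}$ its left inequality delivers the super-$p(x)$ lower bound $u_nf(x,u_n)-p(x)F(x,u_n)\ge C_1|u_n|^{p(x)}[\ln(e+|u_n|)]^{a(x)-1}$, while its right inequality forces the far-field pointwise control $p(x)F(x,u_n)\le u_nf(x,u_n)$. Decomposing $u_nf-p^+F=(u_nf-p(x)F)-(p^+-p(x))F$ to apply these bounds in the far field, and using $(H_0)$ together with membership of $u_n$ in $X$ to handle $\{|u_n|<M\}$, I would derive a uniform modular estimate. The key point is that the logarithmic gap $a(x)-p(x)\ge$ const $>0$ built into $(H_1)$ is exactly what is needed to absorb the polynomial error produced by the spread between $p^-$ and $p^+$ (an error that is absent in the constant exponent case). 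This yields $\int_{\mathbb R^N}(|\nabla u_n|^{p(x)}+V(x)|u_n|^{p(x)})\,dx\le C$, and by Proposition 2.3 this contradicts $\|u_n\|\to\infty$.

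Once $\{u_n\}$ is known to be bounded in $X$, the remaining steps are routine. By reflexivity (Proposition~\ref{prop2.5}(i)) one extracts a subsequence with $u_n\rightharpoonup u$ in $X$; the compact embedding $X\hookrightarrow L^{q(\cdot)}(\mathbb R^N)$ for $p\le q\ll p^\ast$ from Proposition~\ref{prop2.5}(ii), itself a consequence of the potential well hypothesis $(V)$, provides strong convergence in $L^{p(\cdot)}$ and $L^{\alpha(\cdot)}$. Combining $(H_0)$ with the Nemytsky continuity (Proposition 2.2) and the H\"older inequality (Proposition 2.1) then yields $\int_{\mathbb R^N}f(x,u_n)(u_n-u)\,dx\to 0$. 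Since $(1+\|u_n\|)\|\varphi'(u_n)\|_{X^\ast}\to 0$ and $\{u_n-u\}$ is bounded in $X$, one has $\langle\varphi'(u_n),u_n-u\rangle\to 0$, hence $\langle L(u_n),u_n-u\rangle\to 0$. Weak convergence gives $\langle L(u),u_n-u\rangle\to 0$, so subtracting produces $\langle L(u_n)-L(u),u_n-u\rangle\to 0$. The $(S_+)$ property of $L$ in Proposition 2.6(ii) then upgrades $u_n\rightharpoonup u$ to $u_n\to u$ strongly in $X$, which is precisely the Cerami condition.
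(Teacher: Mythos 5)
Your steps (b) and (c) are fine and essentially match the paper: once boundedness is known, the paper extracts a weak limit, uses the compact embedding of Proposition \ref{prop2.5}-$ii)$ and Nemytsky continuity to get $f(x,u_n)\to f(x,u)$ in $X^{\ast}$, and concludes via the fact that $L$ is a homeomorphism (you use the $(S_+)$ property instead; both are in Proposition 2.6 and both work). The gap is in step (a), which is the heart of the lemma. Your combination $p^{+}\varphi(u_n)-\langle\varphi'(u_n),u_n\rangle$ forces you to bound $\int(u_nf-p^{+}F)\,dx$ from below, and your claim that the logarithmic gap $a(x)-p(x)\geq\mathrm{const}>0$ absorbs the error $(p^{+}-p(x))F(x,u_n)$ is false: $(H_1)$ only yields the lower bound $u_nf-p(x)F\geq C_1|u_n|^{p(x)}[\ln(e+|u_n|)]^{a(x)-1}$, whereas for the model nonlinearity $f=|t|^{p(x)-2}t[\ln(1+|t|)]^{a(x)}$ one has $F\sim\frac{1}{p(x)}|t|^{p(x)}[\ln(1+|t|)]^{a(x)}$, so on any region where $p(x)\leq p^{+}-\delta$ the error term $(p^{+}-p(x))F$ is of order $|t|^{p(x)}[\ln(e+|t|)]^{a(x)}$ and beats the good term by a full factor of $\ln(e+|t|)$. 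Hence $u_nf-p^{+}F$ can be large and negative, the right-hand side of your identity has no definite sign, and no contradiction follows; there is no uniform modular estimate to feed into Proposition 2.3. This is precisely the obstruction that distinguishes the variable exponent case from $p$ constant, and it is why \cite{jj1} needs the extra hypotheses $(1^{0})$ and $F(x,t)/|t|^{p^{+}}\to\infty$, neither of which is assumed here.

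The paper circumvents this by never letting $p^{+}$ multiply $F$: it pairs $\varphi'(u_n)$ with the weighted test function $\tfrac{1}{p(x)}u_n$, so the exponent appearing against $F$ is the local $p(x)$, at the price of the extra term $\int\frac{1}{p^{2}(x)}u_n|\nabla u_n|^{p(x)-2}\nabla u_n\cdot\nabla p\,dx$ (this is where the hypothesis $|\nabla p|\in L^{\infty}$ in $(p)$ is used). That term is handled by a Young inequality with logarithmic weights, a second test function $\frac{u_n}{\ln(e+|u_n|)}$ gives the two-sided estimate \eqref{a2}, and combining these with $(H_0)$, $(H_1)$ yields the key inequality \eqref{abc5}, namely $\int\frac{|f(x,u_n)u_n|}{\ln(e+|u_n|)}dx\leq c_1\int|u_n|^{p(x)}[\ln(e+|u_n|)]^{p(x)-1}dx+c_2$. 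One then shows the left side must blow up if $\|u_n\|\to\infty$ (Claim 1, via an $\varepsilon$-interpolation using $\langle\varphi'(u_n),u_n\rangle\to0$), uses the gap $a\gg p$ to show that the mass of $\int|u_n|^{p(x)}[\ln(e+|u_n|)]^{p(x)-1}dx$ concentrates on $\{|u_n|<M^{\#}\}$ (Claim 2), and finally invokes $V(x)\to+\infty$ to absorb that low-amplitude region, contradicting Claim 1. If you want to salvage your outline you would need to reproduce some version of this localization and log-weighted testing; the straight $p^{+}$-combination cannot work under $(H_0)$--$(H_1)$ alone.
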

\begin{proof}
Let $\{u_{n}\}\subset X$ be a Cerami sequence, i.e., $\varphi
(u_{n})\rightarrow c$ and $\left\Vert \varphi ^{\prime }(u_{n})\right\Vert
(1+\left\Vert u_{n}\right\Vert )\rightarrow 0$. Suppose $\{u_{n}\}$ is
bounded, then $\{u_{n}\}$ has a weakly convergent subsequence in $X$.
Without loss of generality, we assume that $u_{n}\rightharpoonup u$ in $X$, according to Proposition \ref{prop2.5}-$ii$), then $
f(x,u_{n})\rightarrow f(x,u)$ in $X^{\ast }$. Since $\varphi ^{\prime
}(u_{n})=L(u_{n})-f(x,u_{n})\rightarrow 0$ in $X^{\ast }$, we have $
L(u_{n})\rightarrow f(x,u)$ in $X^{\ast }$. Since $L$ is a homeomorphism, we
have $u_{n}\rightarrow u$ in $X$, and therefore $\varphi $ satisfies Cerami condition.
Hence, we only need to prove the boundedness of the Cerami sequence $\{u_{n}\}$.

We suppose, on the contrary, that there exist $c\in \mathbb{R}$ and $
\{u_{n}\}\subset X$ satisfying:
\begin{equation*}
\varphi (u_{n})\rightarrow c\text{, }\left\Vert \varphi ^{\prime
}(u_{n})\right\Vert (1+\left\Vert u_{n}\right\Vert )\rightarrow 0\text{, }
\left\Vert u_{n}\right\Vert \rightarrow +\infty .
\end{equation*}

Obviously, there holds
\begin{equation*}
\left\vert \frac{1}{p(x)}u_{n}\right\vert _{p(\cdot )}\leq \frac{1}{p^{-}}
\left\vert u_{n}\right\vert _{p(\cdot )},\left\vert \nabla \frac{1}{p(x)}
 u_{n}\right\vert _{p(\cdot )}\leq \frac{1}{p^{-}}\left\vert \nabla
u_{n}\right\vert _{p(\cdot )}+C\left\vert u_{n}\right\vert _{p(\cdot )}.
\end{equation*}

Thus, $\left\Vert \frac{1}{p(x)}u_{n}\right\Vert \leq C\left\Vert
u_{n}\right\Vert $. Therefore, $(\varphi ^{\prime }(u_{n}),\frac{1}{p(x)}
u_{n})\rightarrow 0$. We may now assume that
\begin{eqnarray*}
c+1 &\geq &\varphi (u_{n})-(\varphi ^{\prime }(u_{n}),\frac{1}{p(x)}u_{n}) \\
&=&\int_{\mathbb{R}^{N}}\frac{1}{p(x)}(\left\vert \nabla u_{n}\right\vert
^{p(x)}+V(x)\left\vert u_{n}\right\vert ^{p(x)})dx-\int_{\mathbb{R}
^{N}}F(x,u_{n})dx \\
&&-\{\int_{\mathbb{R}^{N}}\frac{1}{p(x)}(\left\vert \nabla u_{n}\right\vert
^{p(x)}+V(x)\left\vert u_{n}\right\vert ^{p(x)})dx-\int_{\mathbb{R}^{N}}
\frac{1}{p(x)}f(x,u_{n})u_{n}dx \\
&&-\int_{\mathbb{R}^{N}}\frac{1}{p^{2}(x)}u_{n}\left\vert \nabla
u_{n}\right\vert ^{p(x)-2}\nabla u_{n}\nabla pdx\} \\
&\geq &\int_{\mathbb{R}^{N}}\frac{1}{p^{2}(x)}u_{n}\left\vert \nabla
u_{n}\right\vert ^{p(x)-2}\nabla u_{n}\nabla pdx+\int_{\mathbb{R}^{N}}\{
\frac{1}{p(x)}f(x,u_{n})u_{n}-F(x,u_{n})\}dx.
\end{eqnarray*}
Hence,
\begin{eqnarray} \label{a1}
\int_{\mathbb{R}^{N}}\{\frac{f(x,u_{n})u_{n}}{p(x)}-F(x,u_{n})\}dx &\leq
&C_{1}(\int_{\mathbb{R}^{N}}\left\vert u_{n}\right\vert \left\vert \nabla
u_{n}\right\vert ^{p(x)-1}dx+1)  \notag \\
&\leq &\sigma \int_{\mathbb{R}^{N}}\frac{\left\vert \nabla u_{n}\right\vert
^{p(x)}}{\ln (e+\left\vert u_{n}\right\vert )}dx+C_{1}  \notag \\
&&+C(\sigma )\int_{\mathbb{R}^{N}}\left\vert u_{n}\right\vert ^{p(x)}[\ln
(e+\left\vert u_{n}\right\vert )]^{p(x)-1}dx,
\end{eqnarray}
where $\sigma $ is a small enough positive constant.

Now, we claim that
\begin{equation} \label{abc5}
\int_{\mathbb{R}^{N}}\frac{\left\vert f(x,u_{n})u_{n}\right\vert }{\ln
(e+\left\vert u_{n}\right\vert )}dx\leq c_{1}\int_{\mathbb{R}^{N}}\left\vert
u_{n}\right\vert ^{p(x)}[\ln (e+\left\vert u_{n}\right\vert
)]^{p(x)-1}dx+c_{2}. 
\end{equation}
Note that $\frac{u_{n}}{\ln (e+\left\vert u_{n}\right\vert )}\in X$, and $
\left\Vert \frac{u_{n}}{\ln (e+\left\vert u_{n}\right\vert )}\right\Vert
\leq C_{2}\left\Vert u_{n}\right\Vert $. Let $\frac{u_{n}}{\ln (e+\left\vert
u_{n}\right\vert )}$ be a test function, we have
\begin{eqnarray*}
&&\int_{\mathbb{R}^{N}}f(x,u_{n})\frac{u_{n}}{\ln (e+\left\vert
u_{n}\right\vert )}dx \\
&=&\int_{\mathbb{R}^{N}}\left\vert \nabla u_{n}\right\vert ^{p(x)-2}\nabla
u_{n}\nabla \frac{u_{n}}{\ln (e+\left\vert u_{n}\right\vert )}dx+\int_{
\mathbb{R}^{N}}\frac{V(x)\left\vert u_{n}\right\vert ^{p(x)}}{\ln
(e+\left\vert u_{n}\right\vert )}dx+o(1) \\
&=&\int_{\mathbb{R}^{N}}\frac{\left\vert \nabla u_{n}\right\vert
^{p(x)}+V(x)\left\vert u_{n}\right\vert ^{p(x)}}{\ln (e+\left\vert
u_{n}\right\vert )}dx-\int_{\mathbb{R}^{N}}\frac{\left\vert u_{n}\right\vert
\left\vert \nabla u_{n}\right\vert ^{p(x)}}{(e+\left\vert u_{n}\right\vert
)[\ln (e+\left\vert u_{n}\right\vert )]^{2}}dx+o(1).
\end{eqnarray*}
It is easy to check that $\frac{\left\vert u_{n}\right\vert \left\vert
\nabla u_{n}\right\vert ^{p(x)}}{(e+\left\vert u_{n}\right\vert )[\ln
(e+\left\vert u_{n}\right\vert )]^{2}}\leq \frac{1}{2}\frac{\left\vert
\nabla u_{n}\right\vert ^{p(x)}}{\ln (e+\left\vert u_{n}\right\vert )}$.
Thus, we have
\begin{eqnarray} \label{a2}
C_{3}\int_{\mathbb{R}^{N}}\frac{\left\vert \nabla u_{n}\right\vert
^{p(x)}+V(x)\left\vert u_{n}\right\vert ^{p(x)}}{\ln (e+\left\vert
u_{n}\right\vert )}dx-C_{4} &\leq &\int_{\mathbb{R}^{N}}f(x,u_{n})\frac{u_{n}
}{\ln (e+\left\vert u_{n}\right\vert )}dx  \notag \\
&\leq &C_{5}\int_{\mathbb{R}^{N}}\frac{\left\vert \nabla u_{n}\right\vert
^{p(x)}+V(x)\left\vert u_{n}\right\vert ^{p(x)}}{\ln (e+\left\vert
u_{n}\right\vert )}dx+C_{6}. 
\end{eqnarray}
By $(H_{0})$, for any given positive constant $M$, there exist a positive constant
$c_{M}$ such that
\begin{equation} \label{abc6}
\left\vert f(x,u)u\right\vert +\left\vert F(x,u)\right\vert \leq
c_{M}\left\vert u\right\vert ^{p(x)},\forall x\in
\mathbb{R}
^{N},\forall \left\vert u\right\vert \leq M. 
\end{equation}
From (\ref{a1}), (\ref{a2}), (\ref{abc6}) and condition $(H_{1})$, we have
\begin{eqnarray*}
&&\int_{\left\vert u_{n}\right\vert \geq M}f(x,u_{n})\frac{u_{n}}{\ln
(e+\left\vert u_{n}\right\vert )}dx-\int_{\left\vert u_{n}\right\vert
<M}c_{M}\left\vert u_n\right\vert ^{p(x)}dx \\
&&\overset{(H_{1}),\text{(\ref{abc6})}}{\leq }C_{7}\int_{\mathbb{R}^{N}}\{
\frac{f(x,u_{n})u_{n}}{p(x)}-F(x,u_{n})\}dx \\
&&\overset{(\ref{a1})}{\leq }C_{7}\{\sigma \int_{\mathbb{R}^{N}}\frac{
\left\vert \nabla u_{n}\right\vert ^{p(x)}}{\ln (e+\left\vert
u_{n}\right\vert )}dx+C_{8}+C(\sigma )\int_{\mathbb{R}^{N}}\left\vert
u_{n}\right\vert ^{p(x)}[\ln (e+\left\vert u_{n}\right\vert )]^{p(x)-1}dx\}
\\
&&\overset{(\ref{a2})}{\leq }\frac{1}{2}\int_{\mathbb{R}^{N}}\frac{
f(x,u_{n})u_{n}-C_3 V(x)\left\vert u_{n}\right\vert ^{p(x)}}{\ln (e+\left\vert
u_{n}\right\vert )}dx+C_{10}\int_{\mathbb{R}^{N}}\left\vert u_{n}\right\vert
^{p(x)}[\ln (e+\left\vert u_{n}\right\vert )]^{p(x)-1}dx+C_{9} \\
&&\overset{\text{(\ref{abc6})}}{\leq }\frac{1}{2}\int_{\left\vert
u_{n}\right\vert \geq M}f(x,u_{n})\frac{u_{n}}{\ln (e+\left\vert
u_{n}\right\vert )}dx+\frac{1}{2}\int_{\left\vert u_{n}\right\vert
<M}c_{M}\left\vert u_n\right\vert ^{p(x)}dx \\
&&-\frac{C_3}{2}\int_{\mathbb{R}^{N}}\frac{V(x)\left\vert u_{n}\right\vert ^{p(x)}}{\ln
(e+\left\vert u_{n}\right\vert )}dx+C_{10}\int_{\mathbb{R}^{N}}\left\vert
u_{n}\right\vert ^{p(x)}[\ln (e+\left\vert u_{n}\right\vert
)]^{p(x)-1}dx+C_{9}.
\end{eqnarray*}
Thus, there holds
\begin{eqnarray}  \label{abc3}
&&\int_{\left\vert u_{n}\right\vert \geq M}f(x,u_{n})\frac{u_{n}}{\ln
(e+\left\vert u_{n}\right\vert )}dx+C_3\int_{\mathbb{R}^{N}}\frac{
V(x)\left\vert u_{n}\right\vert ^{p(x)}}{\ln (e+\left\vert u_{n}\right\vert )
}dx-3c_{M}\int_{\left\vert u_{n}\right\vert <M}\left\vert u_n\right\vert
^{p(x)}dx  \notag \\
&\leq &2C_{10}\int_{\mathbb{R}^{N}}\left\vert \nabla p\right\vert \left\vert
u_{n}\right\vert ^{p(x)}[\ln (e+\left\vert u_{n}\right\vert
)]^{p(x)-1}dx+2C_{9}.
\end{eqnarray}
From condition $(V)$, when $R_{0}>0$ is large enough, we have $V(x)\geq
3(M+1)c_{M}/C_3$ for any $\left\vert x\right\vert \geq R_{0}$. Thus, we have
\begin{eqnarray} \label{abc4}
&&C_3\int_{\mathbb{R}^{N}}\frac{V(x)\left\vert u_{n}\right\vert ^{p(x)}}{\ln
(e+\left\vert u_{n}\right\vert )}dx-3c_{M}\int_{\left\vert u_{n}\right\vert
<M}\left\vert u_n\right\vert ^{p(x)}dx  \notag \\
&=&C_3\int_{\left\vert x\right\vert \leq R_{0}}\frac{V(x)\left\vert
u_{n}\right\vert ^{p(x)}}{\ln (e+\left\vert u_{n}\right\vert )}
dx+C_3\int_{\left\vert x\right\vert >R_{0}}\frac{V(x)\left\vert
u_{n}\right\vert ^{p(x)}}{\ln (e+\left\vert u_{n}\right\vert )}dx  \notag \\
&&-3c_{M}\int_{\left\vert u_{n}\right\vert <M,\left\vert x\right\vert \leq
R_{0}}\left\vert u_n\right\vert ^{p(x)}dx-3c_{M}\int_{\left\vert
u_{n}\right\vert <M,\left\vert x\right\vert >R_{0}}\left\vert u_n\right\vert
^{p(x)}dx  \notag \\
&\geq &\int_{\left\vert u_{n}\right\vert <M,\left\vert x\right\vert >R_{0}}
\frac{c_{M}\left\vert u_{n}\right\vert ^{p(x)}}{\ln (e+\left\vert
u_{n}\right\vert )}dx-c_{2}  \notag \\
&&\overset{(\ref{abc6})}{\geq }\int_{\left\vert u_{n}\right\vert <M}\frac{
\left\vert f(x,u_{n})u_{n}\right\vert }{\ln (e+\left\vert u_{n}\right\vert )}
dx-c_{3}. 
\end{eqnarray}
Combining \eqref{abc3} and \eqref{abc4}, we have
\begin{equation*}
\int_{\mathbb{R}^{N}}\frac{\left\vert f(x,u_{n})u_{n}\right\vert }{\ln
(e+\left\vert u_{n}\right\vert )}dx\leq 2C_{10}\int_{\mathbb{R}
^{N}}\left\vert u_{n}\right\vert ^{p(x)}[\ln (e+\left\vert u_{n}\right\vert
)]^{p(x)-1}dx+c_{4}.
\end{equation*}
Therefore, \eqref{abc5} is valid.

\textit{Claim 1.} $\int_{\mathbb{R}^{N}}\frac{\left\vert
f(x,u_{n})u_{n}\right\vert }{\ln (e+\left\vert u_{n}\right\vert )}
dx\rightarrow +\infty $.

We suppose the contrary. Up to a sequence, we can see $\left\{ \int_{\mathbb{
R}^{N}}\frac{\left\vert f(x,u_{n})u_{n}\right\vert }{\ln (e+\left\vert
u_{n}\right\vert )}dx\right\} $ is bounded.

Let $\varepsilon >0$ satisfy $\varepsilon <\min \{1,p^{-}-1,\frac{1}{p^{\ast
+}},(p^{\ast }-\alpha )^{-}\}$. Since $\left\Vert \varphi ^{\prime
}(u_{n})\right\Vert \left\Vert u_{n}\right\Vert \rightarrow 0$ and $\|u_n\|\rightarrow +\infty$, we have
\begin{eqnarray*}
&&\int_{\mathbb{R}^{N}}(\left\vert \nabla u_{n}\right\vert
^{p(x)}+V(x)\left\vert u_{n}\right\vert ^{p(x)})dx \\
&=&\int_{\mathbb{R}^{N}}f(x,u_{n})u_{n}dx+o(1) \\
&\leq &\int_{\mathbb{R}^{N}}\left\vert f(x,u_{n})u_{n}\right\vert
^{\varepsilon }[\ln (e+\left\vert u_{n}\right\vert )]^{1-\varepsilon }\left[
\frac{\left\vert f(x,u_{n})u_{n}\right\vert }{\ln (e+\left\vert
u_{n}\right\vert )}\right] ^{1-\varepsilon }dx+o(1) \\
&\leq &c_{5}(1+\left\Vert u_{n}\right\Vert )^{1+\varepsilon }\int_{\mathbb{R}
^{N}}\frac{\left\vert f(x,u_{n})u_{n}\right\vert ^{\varepsilon
}(1+\left\vert u_{n}\right\vert ^{\varepsilon ^{2}})}{(1+\left\Vert
u_{n}\right\Vert )^{1+\varepsilon }}\left[ \frac{\left\vert
f(x,u_{n})u_{n}\right\vert }{\ln (e+\left\vert u_{n}\right\vert )}\right]
^{1-\varepsilon }dx+o(1) \\
&=&c_{5}(1+\left\Vert u_{n}\right\Vert )^{1+\varepsilon }\int_{\mathbb{R}
^{N}}\frac{\left\vert f(x,u_{n})u_{n}\right\vert ^{\varepsilon }+[\left\vert
f(x,u_{n})\right\vert \left\vert u_{n}\right\vert ^{1+\varepsilon
}]^{\varepsilon }}{(1+\left\Vert u_{n}\right\Vert )^{1+\varepsilon }}\left[
\frac{\left\vert f(x,u_{n})u_{n}\right\vert }{\ln (e+\left\vert
u_{n}\right\vert )}\right] ^{1-\varepsilon }dx+o(1) \\
&\leq &c_{5}(1+\left\Vert u_{n}\right\Vert )^{1+\varepsilon }\int_{\mathbb{R}
^{N}}\left[ \left( \frac{\left\vert f(x,u_{n})u_{n}\right\vert }{
(1+\left\Vert u_{n}\right\Vert )^{\frac{1+\varepsilon }{\varepsilon }}}
\right) ^{\varepsilon }+\left( \frac{\left\vert f(x,u_{n})\right\vert
\left\vert u_{n}\right\vert ^{1+\varepsilon }}{(1+\left\Vert
u_{n}\right\Vert )^{\frac{1+\varepsilon }{\varepsilon }}}\right)
^{\varepsilon }\right] \left[ \frac{\left\vert f(x,u_{n})u_{n}\right\vert }{
\ln (e+\left\vert u_{n}\right\vert )}\right] ^{1-\varepsilon }dx+o(1) \\
&\leq &c_{6}(1+\left\Vert u_{n}\right\Vert )^{1+\varepsilon }+c_{7}.
\end{eqnarray*}

It is a contradiction. Therefore, \textit{Claim 1} is valid.

From \textit{Claim 1} and (\ref{abc5}), we can see
\begin{equation} \label{abc8}
\int_{\mathbb{R}^{N}}\left\vert u_{n}\right\vert ^{p(x)}[\ln (e+\left\vert
u_{n}\right\vert )]^{p(x)-1}dx\rightarrow +\infty \text{ as }n\rightarrow
+\infty. 
\end{equation}

From hypothesis $(H_{1})$ and (\ref{abc5}), we can see
\begin{eqnarray} \label{aj1}
\int_{\left\vert u_{n}\right\vert \geq M}\left\vert u_{n}\right\vert
^{p(x)}[\ln (e+\left\vert u_{n}\right\vert )]^{a(x)-1}dx &\leq &C_{11}\int_{
\mathbb{R}^{N}}\frac{\left\vert f(x,u_{n})u_{n}\right\vert }{\ln
(e+\left\vert u_{n}\right\vert )}dx  \notag \\
&\leq &C_{12}\int_{\mathbb{R}^{N}}\left\vert u_{n}\right\vert ^{p(x)}[\ln
(e+\left\vert u_{n}\right\vert )]^{p(x)-1}dx+C_{12}. 
\end{eqnarray}

Note that $a>>p$ in $\mathbb{R}
^{N}$, then there is a positive constant $M^{\#}>M$ (where $M$ is defined in
(H$_{1}$)) such that$\ $
\begin{equation*}
\lbrack \ln (e+\left\vert t\right\vert )]^{a(x)-p(x)}\geq 4C_{12},\text{ }
\forall \text{ }\left\vert t\right\vert \geq M^{\#},\forall x\in
\mathbb{R}^{N}.
\end{equation*}

\textit{Claim 2.} $\underset{n\rightarrow \infty }{\overline{\lim }}
\int_{\left\vert u_{n}\right\vert \geq M^{\#}}\left\vert u_{n}\right\vert
^{p(x)}[\ln (e+\left\vert u_{n}\right\vert )]^{p(x)-1}dx<\frac{1}{2}\int_{
\mathbb{R}^{N}}\left\vert u_{n}\right\vert ^{p(x)}[\ln (e+\left\vert
u_{n}\right\vert )]^{p(x)-1}dx$.

We argue by contradiction. Up to a sequence, we can see
\begin{equation}\label{aj2}
\int_{\left\vert u_{n}\right\vert \geq M^{\#}}\left\vert u_{n}\right\vert
^{p(x)}[\ln (e+\left\vert u_{n}\right\vert )]^{p(x)-1}dx\geq \frac{1}{3}
\int_{\mathbb{R}^{N}}\left\vert u_{n}\right\vert ^{p(x)}[\ln (e+\left\vert
u_{n}\right\vert )]^{p(x)-1}dx.  
\end{equation}
Combining (\ref{abc8}) and $(3.9)$, we can see
\begin{equation} \label{abc9}
\int_{\left\vert u_{n}\right\vert \geq M^{\#}}\left\vert u_{n}\right\vert
^{p(x)}[\ln (e+\left\vert u_{n}\right\vert )]^{p(x)-1}dx\rightarrow +\infty
\text{ as }n\rightarrow +\infty . 
\end{equation}
According to the defifnition of $M^{\#}$ and (\ref{aj1}), we can see
\begin{eqnarray*}
&&4C_{12}\int_{\left\vert u_{n}\right\vert \geq M^{\#}}\left\vert
u_{n}\right\vert ^{p(x)}[\ln (e+\left\vert u_{n}\right\vert )]^{p(x)-1}dx \\
&\leq &\int_{\left\vert u_{n}\right\vert \geq M^{\#}}\left\vert
u_{n}\right\vert ^{p(x)}[\ln (e+\left\vert u_{n}\right\vert )]^{a(x)-1}dx \\
&\leq &\int_{\left\vert u_{n}\right\vert \geq M}\left\vert u_{n}\right\vert
^{p(x)}[\ln (e+\left\vert u_{n}\right\vert )]^{a(x)-1}dx \\
&\leq &C_{12}\int_{\mathbb{R}^{N}}\left\vert u_{n}\right\vert ^{p(x)}[\ln
(e+\left\vert u_{n}\right\vert )]^{p(x)-1}dx+C_{12} \\
&&\overset{(3.9)}{\leq }3C_{12}\int_{\left\vert u_{n}\right\vert \geq
M^{\#}}\left\vert u_{n}\right\vert ^{p(x)}[\ln (e+\left\vert
u_{n}\right\vert )]^{p(x)-1}dx+C_{12}.
\end{eqnarray*}
Therefore,
$\int_{\left\vert u_{n}\right\vert \geq M^{\#}}\left\vert u_{n}\right\vert
^{p(x)}[\ln (e+\left\vert u_{n}\right\vert )]^{p(x)-1}dx\leq C_{12}$,
which contradicts (\ref{abc9}). Therefore, \textit{Claim 2} is valid.

From \textit{Claim 2}, for large enough $n$, we have
\begin{equation} \label{abc10}
\int_{\left\vert u_{n}\right\vert <M^{\#}}\left\vert u_{n}\right\vert
^{p(x)}[\ln (e+\left\vert u_{n}\right\vert )]^{p(x)-1}dx\geq \frac{1}{2}
\int_{\mathbb{R}^{N}}\left\vert u_{n}\right\vert ^{p(x)}[\ln (e+\left\vert
u_{n}\right\vert )]^{p(x)-1}dx. 
\end{equation}

Notice that $V(x)\rightarrow +\infty $ as $\left\vert x\right\vert
\rightarrow +\infty $.

From (\ref{a2}), (\ref{abc5}) and (\ref{abc10}), we have
\begin{eqnarray*}
&&\int_{\mathbb{R}^{N}}\frac{\left\vert \nabla u_{n}\right\vert
^{p(x)}+V(x)\left\vert u_{n}\right\vert ^{p(x)}}{\ln (e+\left\vert
u_{n}\right\vert )}dx \\
&&\overset{(\ref{a2})}{\leq }c_{8}\int_{\mathbb{R}^{N}}f(x,u_{n})\frac{u_{n}
}{\ln (e+\left\vert u_{n}\right\vert )}dx+C_{4} \\
&\leq &c_{8}\int_{\mathbb{R}^{N}}\frac{\left\vert f(x,u_{n})u_{n}\right\vert
}{\ln (e+\left\vert u_{n}\right\vert )}dx+C_{4} \\
&&\overset{(\ref{abc5})}{\leq }c_{9}\int_{\mathbb{R}^{N}}\left\vert
u_{n}\right\vert ^{p(x)}[\ln (e+\left\vert u_{n}\right\vert
)]^{p(x)-1}dx+C_{12} \\
&&\overset{(\ref{abc10})}{\leq }2c_{9}\int_{\left\vert u_{n}\right\vert
<M^{\#}}\left\vert u_{n}\right\vert ^{p(x)}[\ln (e+\left\vert
u_{n}\right\vert )]^{p(x)-1}dx+C_{12} \\
&\leq &2c_{9}\int_{\left\vert u_{n}\right\vert <M^{\#},\left\vert
x\right\vert \geq R_{0}}\frac{\left\vert u_{n}\right\vert ^{p(x)}}{\ln
(e+\left\vert u_{n}\right\vert )}[\ln (e+\left\vert u_{n}\right\vert
)]^{p(x)}dx \\
&&+2c_{9}\int_{\left\vert u_{n}\right\vert <M^{\#},\left\vert x\right\vert
<R_{0}}\frac{\left\vert u_{n}\right\vert ^{p(x)}}{\ln (e+\left\vert
u_{n}\right\vert )}[\ln (e+\left\vert u_{n}\right\vert )]^{p(x)}dx+C_{13} \\
&\leq &\frac{1}{2}\int_{\left\vert u_{n}\right\vert \leq M^{\#},\left\vert
x\right\vert \geq R_{0}}\frac{V(x)\left\vert u_{n}\right\vert ^{p(x)}}{\ln
(e+\left\vert u_{n}\right\vert )}dx+C_{14} \\
&\leq &\frac{1}{2}\int_{\mathbb{R}^{N}}\frac{\left\vert \nabla
u_{n}\right\vert ^{p(x)}+V(x)\left\vert u_{n}\right\vert ^{p(x)}}{\ln
(e+\left\vert u_{n}\right\vert )}dx+C_{14},
\end{eqnarray*}
where $R_{0}>0$ be large enough such that
\begin{equation*}
\underset{\left\vert x\right\vert \geq R_{0}}{\inf }V(x)\geq
4M^{\#p^{+}}c_{9}.
\end{equation*}

Therefore, $\int_{\mathbb{R}^{N}}\frac{\left\vert \nabla u_{n}\right\vert
^{p(x)}+V(x)\left\vert u_{n}\right\vert ^{p(x)}}{\ln (e+\left\vert
u_{n}\right\vert )}dx$ is bounded, and then $\int_{\mathbb{R}^{N}}\frac{
\left\vert f(x,u_{n})u_{n}\right\vert }{\ln (e+\left\vert u_{n}\right\vert )}
dx$ is bounded. It is a contradiction.

Summarizing the above discussion, we obtain that \{$u_{n}$\} is bounded. Therefore, the proof of Lemma 3.3 is completed.
\end{proof}

Next we give the proof of Theorem 1.1.

\textbf{Proof of Theorem 1.1}. We only prove the existence and asymptotic
behavior of positive solution $u_{1}$. The rest is similar. We divide the proof
into three steps as follows.

\textit{Step 1}. We show the existence of a nontrivial nonnegative weak
solution $u_{1}$.

Denote
\begin{equation*}
f^{+}(x,u)=\left\{
\begin{array}{c}
f(x,u),\, u\geq 0 \\
0,\qquad\,\,\, u<0
\end{array}
\right. \text{, }F^{+}(x,u)=\int_{0}^{u}f^{+}(x,t)dt.
\end{equation*}

Consider the auxiliary problem
\begin{equation*}
\text{(P}^{+}\text{) }\left\{
\begin{array}{l}
-\bigtriangleup _{p(x)}u+V(x)\left\vert u\right\vert ^{p(x)-2}u=f^{+}(x,u),
\text{ in }
\mathbb{R}^{N}, \\
u\in X.
\end{array}
\right.
\end{equation*}

The corresponding functional of (P$^{+}$) is
\begin{equation*}
\varphi ^{+}\left( u\right) =\int_{\mathbb{R}^{N}}\frac{1}{p(x)}(\left\vert
\nabla u\right\vert ^{p(x)}+V(x)\left\vert u\right\vert ^{p(x)})dx-\int_{
\mathbb{R}^{N}}F^{+}(x,u)dx,\forall u\in X.
\end{equation*}

Obviously, $\varphi ^{+}$ is $C^{1}$ in $X$.

Let's show that $\varphi ^{+}$ satisfies conditions of the well-known Mountain Pass
Lemma. Similar to the proof of Lemma 3.3, we can see that $\varphi ^{+}$
satisfy Cerami condition. Since $p(x)<\alpha (x)<<p^{\ast }(x),$ the
embedding $X\hookrightarrow L^{\alpha (\cdot )}(\Omega )$ is compact, then
there exists $C_{0}>0$ such that
\begin{equation*}
\left\vert u\right\vert _{p(\cdot )}\leq C_{0}\left\Vert u\right\Vert \text{
, }\forall u\in X.
\end{equation*}

By the assumptions $(H_{0})$ and $(H_{2})$, we have
\begin{equation*}
F^{+}(x,t)\leq \sigma \frac{1}{p(x)}\left\vert t\right\vert ^{p(x)}+C(\sigma
)\left\vert t\right\vert ^{\alpha (x)}\text{, }\forall (x,t)\in \mathbb{R}
^{N}\times
\mathbb{R}.
\end{equation*}

Let $\sigma \in (0,\frac{1}{4}V_{0})$, where $V_{0}$ is defined in (V). We
have
\begin{eqnarray*}
&&\int_{\mathbb{R}^{N}}\frac{1}{p(x)}(\left\vert \nabla u\right\vert
^{p(x)}+V(x)\left\vert u\right\vert ^{p(x)})dx-\sigma \int_{\mathbb{R}^{N}}
\frac{1}{p(x)}\left\vert u\right\vert ^{p(x)}dx \\
&\geq &\frac{3}{4}\int_{\mathbb{R}^{N}}\frac{1}{p(x)}(\left\vert \nabla
u\right\vert ^{p(x)}+V(x)\left\vert u\right\vert ^{p(x)})dx.
\end{eqnarray*}

Since $\alpha \in C(\mathbb{R}^{N})$ and $p(x)<<\alpha (x)<<p^{\ast }(x)$,
we can divide the domain $\mathbb{R}^{N}$ into a sequence of disjoint small
cubes $\Omega _{i}$ ($i=1,\cdots ,\infty $) each one has the same
side length such that $\mathbb{R}^{N}=\underset{i=1}{\overset{\infty }{\cup
}}\overline{\Omega _{i}}$ and
\begin{equation*}
\underset{\Omega _{i}}{\sup }\, p(x)<\underset{\Omega _{i}}{\inf }\, \alpha
(x)\leq \underset{\Omega _{i}}{\sup }\, \alpha (x)<\underset{\Omega _{i}}{\inf }\,
p^{\ast }(x).
\end{equation*}

We introduce a number $\epsilon$ as follows:
\begin{equation*}
\epsilon :=\underset{1\leq i\leq \infty }{\inf }\{\underset{\Omega _{i}}{\inf
}\, \alpha (x)-\underset{\Omega _{i}}{\sup }\, p(x)\}.
\end{equation*}
According to assumptions $(p)$ and $(H_0)$, one easily see that $\epsilon>0$ as long as
the side length of $\Omega_i$ is made sufficiently small.

Denote $\left\Vert u\right\Vert _{\Omega _{i}}$ the Sobolev norm of $u$ on $
\Omega _{i}$, i.e.,
\begin{equation*}
\left\Vert u\right\Vert _{\Omega _{i}}=\inf \left\{ \lambda >0\left\vert
\int_{\Omega _{i}}\frac{1}{p(x)}\left( \left\vert \frac{\nabla u}{\lambda }
\right\vert ^{p(x)}+V(x)\left\vert \frac{u}{\lambda }\right\vert
^{p(x)}\right) dx\leq 1\right. \right\}
\end{equation*}
and $\left\vert u\right\vert _{\alpha (\cdot ),\Omega _{i}}$ the Lebesgue
norm of $u$ on $\Omega _{i}$, i.e.,
\begin{equation*}
\left\vert u\right\vert _{\alpha (\cdot ),\Omega _{i}}=\inf \left\{ \lambda
>0\left\vert \int_{\Omega _{i}}\left\vert \frac{u}{\lambda }\right\vert
^{\alpha (x)}dx\leq 1\right. \right\} .
\end{equation*}

It is easy to see that $\left\Vert u\right\Vert _{\Omega _{i}}\leq
\left\Vert u\right\Vert $, and there exist $\xi _{i},\eta _{i}\in \overline{
\Omega _{i}}$ such that
\begin{eqnarray*}
\left\vert u\right\vert _{\alpha (\cdot ),\Omega _{i}}^{\alpha (\xi _{i})}
&=&\int_{\Omega _{i}}\left\vert u\right\vert ^{\alpha (x)}dx, \\
\left\Vert u\right\Vert _{\Omega _{i}}^{p(\eta _{i})} &=&\int_{\Omega _{i}}(
\frac{1}{p(x)}\left\vert \nabla u\right\vert ^{p(x)}+\frac{V(x)}{p(x)}
\left\vert u\right\vert ^{p(x)})dx.
\end{eqnarray*}

When $\left\Vert u\right\Vert $ is small enough, we have
\begin{eqnarray*}
C(\sigma )\int_{\Omega }\left\vert u\right\vert ^{\alpha (x)}dx &=&C(\sigma )
\underset{i=1}{\overset{\infty }{\sum }}\int_{\Omega _{i}}\left\vert
u\right\vert ^{\alpha (x)}dx \\
&=&C(\sigma )\underset{i=1}{\overset{\infty }{\sum }}\left\vert u\right\vert
_{\alpha (\cdot ),\Omega _{i}}^{\alpha (\xi _{i})}\text{ (where }\xi _{i}\in
\overline{\Omega _{i}}\text{)} \\
&\leq &C\underset{i=1}{\overset{\infty }{\sum }}\left\Vert u\right\Vert
_{\Omega _{i}}^{\alpha (\xi _{i})}\text{ (by Corollary 8.3.2 of \cite{j5})}
\\
&\leq &C\left\Vert u\right\Vert ^{\epsilon }\underset{i=1}{\overset{\infty }{
\sum }}\left\Vert u\right\Vert _{\Omega _{i}}^{p(\eta _{i})}\text{ (where }
\eta _{i}\in \overline{\Omega _{i}}\text{)} \\
&=&C\left\Vert u\right\Vert ^{\epsilon }\underset{i=1}{\overset{\infty }{
\sum }}\int_{\Omega _{i}}(\frac{1}{p(x)}\left\vert \nabla u\right\vert
^{p(x)}+\frac{V(x)}{p(x)}\left\vert u\right\vert ^{p(x)})dx \\
&=&C\left\Vert u\right\Vert ^{\epsilon }\int_{\mathbb{R}^{N}}(\frac{1}{p(x)}
\left\vert \nabla u\right\vert ^{p(x)}+\frac{V(x)}{p(x)}\left\vert
u\right\vert ^{p(x)})dx \\
&\leq &\frac{1}{4}\int_{\mathbb{R}^{N}}(\frac{1}{p(x)}\left\vert \nabla
u\right\vert ^{p(x)}+\frac{V(x)}{p(x)}\left\vert u\right\vert ^{p(x)})dx.
\end{eqnarray*}

Thus, there holds
\begin{eqnarray*}
\varphi ^{+}(u) &\geq &\int_{\mathbb{R}^{N}}(\frac{1}{p(x)}\left\vert \nabla
u\right\vert ^{p(x)}+\frac{V(x)}{p(x)}\left\vert u\right\vert
^{p(x)})dx-\sigma \int_{\mathbb{R}^{N}}\frac{1}{p(x)}\left\vert u\right\vert
^{p(x)}dx-C(\sigma )\int_{\mathbb{R}^{N} }\left\vert u\right\vert ^{\alpha (x)}dx \\
&\geq &\frac{1}{2}\int_{\mathbb{R}^{N}}(\frac{1}{p(x)}\left\vert \nabla
u\right\vert ^{p(x)}+\frac{V(x)}{p(x)}\left\vert u\right\vert ^{p(x)})dx
\text{ when }\|u\| \text{ is small enough.}
\end{eqnarray*}

Therefore, there exist $r>0$ and $\delta >0$ such that $\varphi (u)\geq
\delta >0$ for every $u\in X$ and $\left\Vert u\right\Vert =r.$

From $(H_{1})$, we have
\begin{equation*}
F^{+}(x,t)\geq C_{1}\left\vert t\right\vert ^{p(x)}[\ln (1+\left\vert
t\right\vert )]^{a(x)}-c_{2},\forall (x,t)\in \mathbb{R}^{N}\times
\mathbb{R}^{+}.
\end{equation*}

We may assume there exists $x_{0}\in \Omega $ such that $\nabla p(x_{0})\neq
0$.

Define $h\in C_{0}(\overline{B(x_{0},\varepsilon )})$ as
\begin{equation*}
h(x)=\left\{
\begin{array}{cc}
0, & \left\vert x-x_{0}\right\vert \geq \varepsilon \\
\varepsilon -\left\vert x-x_{0}\right\vert , & \left\vert x-x_{0}\right\vert
<\varepsilon
\end{array}
\right. .
\end{equation*}

From Lemma 2.8, we may let $\varepsilon >0$ is small enough such that
\begin{equation*}
\varphi ^{+}(th)\rightarrow -\infty \text{ as }t\rightarrow +\infty .
\end{equation*}

Since $\varphi ^{+}\left( 0\right) =0,$ $\varphi ^{+}$ satisfies the
conditions of Mountain Pass lemma. So $\varphi ^{+}$ admits at least one
nontrivial critical point, which implies the problem (P$^{+}$) has a
nontrivial weak solution $u_{1}$. It is easy to see that $u_{1}$ is
nonnegative. Therefore, $u_{1}$ is a nontrivial nonnegative weak solution of
$(P)$.

\textit{Step 2.} We obtain the asymptotic behavior of $u_{1}$.

According to the Theorems 2.2 and 3.2 of \cite{n19}, $u_{1}$ is locally
bounded. From Theorem 1.2 of \cite{e10}, $u_{1}$ is locally $C^{1,\alpha }$
continuous. Similar to the proof of Proposition 2.5\ of \cite{zj6}, we
obtain that $u_{1}$ is $C^{1,\alpha }(\mathbb{R}
^{N})$ and satisfies $\underset{\left\vert x\right\vert \rightarrow \infty }{
\lim }u_{1}(x)=0$ and $\underset{\left\vert x\right\vert \rightarrow \infty }
{\lim }\left\vert \nabla u_{1}(x)\right\vert =0$.

\textit{Step 3.} We show that $u_{1}$ is positive.

Noticing that $f$ satisfies $(H_{0})$ and $u_{1}$ is nonnegative. We can see
that
\begin{eqnarray*}
&&-\bigtriangleup _{p(x)}u_{1}+V(x)\left\vert u_{1}\right\vert
^{p(x)-2}u_{1}+C(\left\vert u_{1}\right\vert ^{p(x)-2}u_{1}+\left\vert
u_{1}\right\vert ^{\alpha (x)-2}u_{1}) \\
&=&C(\left\vert u_{1}\right\vert ^{p(x)-2}u_{1}+\left\vert u_{1}\right\vert
^{\alpha (x)-2}u_{1})+f(x,u_{1})\geq 0.
\end{eqnarray*}

Thus, $u_{1}$ is a nontrivial nonnegative weak supersolution of the following
equation
\begin{equation*}
-\bigtriangleup _{p(x)}u_{1}+V(x)\left\vert u_{1}\right\vert
^{p(x)-2}u_{1}+C(\left\vert u_{1}\right\vert ^{p(x)-2}u_{1}+\left\vert
u_{1}\right\vert ^{\alpha (x)-2}u_{1})=0.
\end{equation*}

According to the Theorem 1.1 of \cite{e31}, we can see that $u_{1}>0$ in $
\mathbb{R}^{N}$. By now, we have finished the proof of Theorem 1.1.

Now, we proceed to prove Theorem 1.2. For this purpose, we shall first make some functional-analytic preparations. Note that $X$
is a reflexive and separable Banach space. Therefore, there are $\left\{ e_{j}\right\} \subset X$ and $\left\{
e_{j}^{\ast }\right\} \subset X^{\ast }$ (see \cite{22}, Section 17,
Theorem 2-3) such that
\begin{equation*}
X=\overline{span}\{e_{j}\text{, }j=1,2,\cdots \}\text{, }\left. {}\right.
X^{\ast }=\overline{span}^{W^{\ast }}\{e_{j}^{\ast }\text{, }j=1,2,\cdots \},
\end{equation*}
and
\begin{equation*}
<e_{j}^{\ast },e_{j}>=\left\{
\begin{array}{c}
1,i=j, \\
0,i\neq j.
\end{array}
\right.
\end{equation*}

For convenience, we write $X_{j}=span\{e_{j}\}$, $Y_{k}=\overset{k}{\underset
{j=1}{\oplus }}X_{j}$, and $Z_{k}=\overline{\overset{\infty }{\underset{j=k}{
\oplus }}X_{j}}$.

\begin{lemma}\label{lemma3.4}
Let $\alpha \in C_{+}\left( \mathbb{R}^{N}\right)$, $
\alpha (x)<<p^{\ast }(x)$ for any $x\in \mathbb{R}^{N}$, denote
\begin{equation*}
\beta _{k}=\sup \left\{ \left\vert u\right\vert _{\alpha (\cdot )}\left\vert
\left\Vert u\right\Vert =1,u\in Z_{k}\right. \right\}.
\end{equation*}
Then, $\underset{k\rightarrow \infty }{\lim }\beta _{k}=0$.
\end{lemma}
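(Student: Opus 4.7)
The plan is to follow the classical fountain-theorem style argument that exploits two features already at our disposal: the compact embedding $X\hookrightarrow L^{\alpha(\cdot)}(\mathbb{R}^N)$ given by Proposition~\ref{prop2.5}-$ii$), and the biorthogonal system $(e_j,e_j^{\ast})$ attached to the Markushevich-type decomposition $X=\overline{\mathrm{span}}\{e_j\}$, $X^{\ast}=\overline{\mathrm{span}}^{W^{\ast}}\{e_j^{\ast}\}$ introduced just before the lemma.

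First, I would observe that $Z_{k+1}\subset Z_{k}$ forces $\beta_{k+1}\leq \beta_{k}$, so the sequence $\{\beta_k\}$ is non-increasing and bounded below by $0$; hence it converges to some $\beta\geq 0$, and it suffices to prove $\beta=0$. For each $k$, pick a witness $u_k\in Z_k$ with $\|u_k\|=1$ and
\begin{equation*}
|u_k|_{\alpha(\cdot)}\geq \beta_k-\tfrac{1}{k}.
\end{equation*}
Since $\|u_k\|=1$ and $X$ is reflexive (Proposition~\ref{prop2.5}-$i$)), there is a subsequence (still denoted $\{u_k\}$) and some $u\in X$ with $u_k\rightharpoonup u$ in $X$.

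The core step is to identify $u=0$. For any fixed index $j$, once $k>j$ we have $u_k\in Z_k\subset Z_{j+1}$, so $\langle e_j^{\ast},u_k\rangle=0$. Passing to the weak limit gives $\langle e_j^{\ast},u\rangle=0$ for every $j$; by biorthogonality and the fact that $\{e_j\}$ generates $X$, this forces $u=0$. Now invoke Proposition~\ref{prop2.5}-$ii$), which (thanks to $\alpha(x)<<p^{\ast}(x)$) yields compactness of $X\hookrightarrow L^{\alpha(\cdot)}(\mathbb{R}^N)$: weak convergence $u_k\rightharpoonup 0$ in $X$ upgrades to strong convergence $u_k\to 0$ in $L^{\alpha(\cdot)}(\mathbb{R}^N)$, that is $|u_k|_{\alpha(\cdot)}\to 0$. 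Combining this with $|u_k|_{\alpha(\cdot)}\geq \beta_k-\tfrac{1}{k}\geq \beta-\tfrac{1}{k}$ gives $\beta\leq 0$, and therefore $\beta=0$.

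The only subtle point I expect to be the main obstacle is the step $u=0$: one must be sure that the decomposition $X=\overline{\mathrm{span}}\{e_j\}$ together with the biorthogonal functionals $\{e_j^{\ast}\}$ genuinely separates points of $X$. This follows from the cited construction (\cite{22}, Section~17, Theorems~2--3), since if $\langle e_j^{\ast},u\rangle=0$ for all $j$ then $u$ is annihilated by a weak-$\ast$ dense subset of $X^{\ast}$, hence by all of $X^{\ast}$, hence $u=0$. Everything else is bookkeeping: monotonicity of $\beta_k$, extracting a weakly convergent subsequence, and applying the compact embedding from Proposition~\ref{prop2.5}.
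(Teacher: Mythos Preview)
Your proposal is correct and follows essentially the same route as the paper's own proof: monotonicity of $\beta_k$, choice of near-maximizers $u_k\in Z_k$ with $\|u_k\|=1$, weak convergence to some $u$, identification $u=0$ via the biorthogonal functionals $e_j^{\ast}$, and then the compact embedding $X\hookrightarrow L^{\alpha(\cdot)}(\mathbb{R}^N)$ to conclude $\beta=0$. Your added remark on why $\langle e_j^{\ast},u\rangle=0$ for all $j$ forces $u=0$ (weak-$\ast$ density of $\mathrm{span}\{e_j^{\ast}\}$ in $X^{\ast}$) makes explicit a step the paper leaves implicit.
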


\begin{proof} Obviously, $0<\beta _{k+1}\leq \beta _{k},$ so $\beta
_{k}\rightarrow \beta \geq 0.$ Let $u_{k}\in Z_{k}$ satisfy
\begin{equation*}
\left\Vert u_{k}\right\Vert =1,0\leq \beta _{k}-\left\vert u_{k}\right\vert
_{\alpha (\cdot )}<\frac{1}{k}.
\end{equation*}
Then there exists a subsequence of $\{u_{k}\}$ (which we still denote by $
\{u_{k}\}$) such that $u_{k}\rightharpoonup u,$ and
\begin{equation*}
<e_{j}^{\ast },u>=\underset{k\rightarrow \infty }{\lim }\left\langle
e_{j}^{\ast },u_{k}\right\rangle =0\text{, }\forall e_{j}^{\ast },
\end{equation*}
which implies that $u=0$, and so $u_{k}\rightharpoonup 0.$ Since the
embedding from $X$ to $L^{\alpha (\cdot )}\left( \mathbb{R}^{N}\right) $ is
compact, then $u_{k}\rightarrow 0$ in $L^{\alpha (\cdot )}\left( \mathbb{R}
^{N}\right) $. Hence we get $\beta _{k}\rightarrow 0$ as $k\rightarrow
\infty $. The proof of Lemma 3.4 is completed.
\end{proof}

To complete the proof of Theorem 1.2, we recall the following critical point lemma (see e.g., \cite[Theorem 4.7]{20a}. If the Cerami condition is replaced by the well known $(P.S.)$-condition, see \cite[Page 221, Theorem 3.6]{e39} for the corresponding version of critical point theorem.

\begin{lemma}
Suppose that $\varphi \in C^{1}(X, \mathbb{R})$ is even, and
satisfies the Cerami condition. Let $V^{+}$, $V^{-}\subset X$ be closed
subspaces of $X$ with codim$V^{+}+1=$dim $V^{-}$, and suppose there holds

($1^{0}$) $\varphi (0)=0$.

($2^{0}$) $\exists \tau >0,$ $\gamma >0$ such that $\forall u\in V^{+}:$ $
\Vert u\Vert =\gamma \Rightarrow \varphi (u)\geq \tau .$

($3^{0}$) $\exists \rho >0\ $such that $\forall u\in V^{-}:$ $\Vert u\Vert
\geq \rho \Rightarrow \varphi (u)\leq 0.$

Consider the following set:
\begin{equation*}
\Gamma =\{g\in C^{0}(X,X)\mid g\text{ is odd, }g(u)=u\text{ if }u\in V^{-}
\text{ and }\Vert u\Vert \geq \rho \},
\end{equation*}
then

($a$) $\forall \delta >0$, $g\in \Gamma $, $S_{\delta }^{+}\cap g(V^{-})\neq
\varnothing $, here $S_{\delta }^{+}=\{u\in V^{+}\mid \Vert u\Vert =\delta
\};$

($b$) the number $\varpi :=\underset{g\in \Gamma }{\inf }\underset{\text{ }
u\in V^{-}}{\sup }\varphi (g(u))\geq \tau >0$ is a critical value for $
\varphi $.\newline
\end{lemma}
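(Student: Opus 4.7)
The plan is to establish (a) by a Brouwer-degree linking argument exploiting the dimension relation $\dim V^-=\operatorname{codim} V^++1$, and then to deduce (b) by a standard minimax scheme combined with an odd deformation lemma valid under the Cerami condition.

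\emph{Part (a).} Let $m=\operatorname{codim} V^+$, so $\dim V^-=m+1<\infty$. Since $V^+$ has finite codimension it admits a closed linear complement $W$ with $\dim W=m$; let $Q\colon X\to W$ be the continuous projection with $\ker Q=V^+$. Enlarging $\rho$ if necessary (condition ($3^0$) continues to hold for larger $\rho$), I may assume $\delta<\rho$. Define
\[
\Psi\colon B_\rho\cap V^-\longrightarrow W\oplus\mathbb{R},\qquad \Psi(u)=\bigl(Qg(u),\,\|(I-Q)g(u)\|-\delta\bigr).
\]
Both sides have dimension $m+1$, so Brouwer degree applies. On $\partial B_\rho\cap V^-$ the defining property of $\Gamma$ gives $g(u)=u$, whence $\Psi(u)=(Qu,\|(I-Q)u\|-\delta)$; a zero there would force $u\in V^+\cap V^-$ with $\|(I-Q)u\|=\|u\|=\rho\ne\delta$, a contradiction. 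The affine homotopy $h_s=sg+(1-s)\,\mathrm{id}$ fixes the boundary pointwise, so the induced homotopy of $\Psi$ remains nonzero on $\partial(B_\rho\cap V^-)$ and leaves the degree unchanged; one is reduced to computing the degree of $\Psi_0(u)=(Qu,\|(I-Q)u\|-\delta)$. Choosing a basis of $V^-$ adapted to a splitting $V^-=(V^-\cap V^+)\oplus V_0$ in which $Q|_{V_0}$ is injective, a further homotopy to an odd linear model together with Borsuk's theorem shows this degree equals $\pm 1$. Hence $\Psi$ vanishes at some $u\in B_\rho\cap V^-$, giving $g(u)\in V^+$ with $\|g(u)\|=\delta$, i.e.\ an element of $S_\delta^+\cap g(V^-)$.

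\emph{Part (b).} The bound $\varpi\ge\tau$ is immediate: applying (a) with $\delta=\gamma$ to an arbitrary $g\in\Gamma$ produces $u\in V^-$ with $g(u)\in S_\gamma^+$, so by ($2^0$), $\sup_{V^-}\varphi\circ g\ge\tau$, and taking the infimum over $\Gamma$ yields $\varpi\ge\tau$. To see $\varpi$ is a critical value, argue by contradiction. Since $\varphi$ is even and satisfies the Cerami condition, an equivariant pseudo-gradient field exists near the level $\varpi$; integrating it produces $\varepsilon\in(0,\tau/2)$ and a continuous odd map $\eta\colon X\to X$ such that (i) $\eta=\mathrm{id}$ off $\varphi^{-1}[\varpi-2\varepsilon,\varpi+2\varepsilon]$, and (ii) $\eta(\{\varphi\le\varpi+\varepsilon\})\subset\{\varphi\le\varpi-\varepsilon\}$; this is the odd/Cerami version of the quantitative deformation lemma, a routine adaptation of the statement in \cite{e39}. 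Since $\mathrm{id}\in\Gamma$ and $\varphi$ is bounded on the finite-dimensional compact $\overline{B_\rho}\cap V^-$ while $\varphi\le 0$ outside, $\varpi<\infty$; pick $g\in\Gamma$ with $\sup_{V^-}\varphi\circ g\le\varpi+\varepsilon$. For $u\in V^-$ with $\|u\|\ge\rho$, ($3^0$) gives $\varphi(g(u))=\varphi(u)\le 0<\varpi-2\varepsilon$, so $\eta\circ g$ agrees with the identity on $\{u\in V^-:\|u\|\ge\rho\}$; as a composition of odd maps it is odd, hence $\eta\circ g\in\Gamma$. But then $\sup_{V^-}\varphi\circ(\eta\circ g)\le\varpi-\varepsilon<\varpi$, contradicting the definition of $\varpi$.

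The principal obstacle is the degree computation in part (a): one must maintain nonvanishing of $\Psi$ on $\partial(B_\rho\cap V^-)$ throughout the homotopy and identify the terminal degree. The latter becomes subtle because the dimension count only forces $\dim(V^-\cap V^+)\ge 1$ (with equality in the generic, fountain-theorem, setting), so a careful coordinate choice on $V^-$ is needed to reduce the calculation to a Borsuk-type statement. By contrast, part (b) is essentially standard once the equivariant deformation lemma under the Cerami condition is granted; the only subtlety relative to the Palais--Smale version is accommodating the weight $1+\|u_n\|$ in the rescaling of the pseudo-gradient flow.
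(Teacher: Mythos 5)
First, note that the paper does not actually prove this lemma: it is quoted from \cite[Theorem 4.7]{20a} (with \cite{e39} cited for the Palais--Smale version), so the comparison is with the standard textbook proof behind that citation. Your Part (b) is exactly that standard minimax/deformation argument and is fine, modulo the odd deformation lemma under the Cerami condition, which is indeed a routine modification (rescale an odd pseudo-gradient by $(1+\|u\|)^{-1}$). The genuine gap is in Part (a). Your homotopy $h_s=sg+(1-s)\,\mathrm{id}$ is admissible, but the terminal degree you need is not $\pm 1$: it is $0$. Writing $V^-=(V^+\cap V^-)\oplus V_0$ and taking the generic case $\dim(V^+\cap V^-)=1$, the zeros of $\Psi_0(u)=(Qu,\|(I-Q)u\|-\delta)$ are the antipodal pair $\pm y_0$ with $y_0\in V^+\cap V^-$, $\|y_0\|=\delta$; because the second component of $\Psi_0$ is \emph{even} in $u$, these two zeros carry opposite local degrees, so $\deg(\Psi_0,B_\rho\cap V^-,0)=0$ (check it in $\mathbb{R}^2$: $\Psi_0(w,t)=(w,|t|-\delta)$ has local degrees $+1$ at $(0,\delta)$ and $-1$ at $(0,-\delta)$). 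For the same reason Borsuk's theorem cannot be applied to $\Psi_0$ or to any map homotopic to it on this boundary: $\Psi_0$ is not odd, and no choice of coordinates turns a zero degree into $\pm1$.

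The underlying problem is that your argument for (a) never uses the oddness of $g$, which is essential: for a merely continuous $g$ equal to the identity on $\{u\in V^-:\|u\|\ge\rho\}$ the conclusion is false in general (already in $\mathbb{R}^3$ with $V^+$, $V^-$ two distinct planes through the origin, the circle $\{u\in V^-:\|u\|=\rho\}$ has linking number zero with $S_\delta^+$, hence bounds a singular disc avoiding $S_\delta^+$, and extending that disc map to $X$ gives a non-odd $g$ violating (a)). The correct standard argument (as in \cite[Theorem 4.7]{20a}, or in the proof of the fountain theorem) uses oddness through the Borsuk--Ulam theorem rather than a homotopy to the identity: after your reduction to $\delta<\rho$, set $U=\{u\in B_\rho\cap V^-:\|g(u)\|<\delta\}$, a bounded open symmetric neighborhood of $0$ in the $(m+1)$-dimensional space $V^-$ whose closure misses $\{\|u\|=\rho\}$; on $\partial U$ one has $\|g(u)\|=\delta$, and the odd continuous map $Q\circ g:\partial U\to W\cong\mathbb{R}^m$ must vanish at some $u$, giving $g(u)\in V^+$ with $\|g(u)\|=\delta$. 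Replacing your degree computation by this argument repairs Part (a); Part (b) can then stand essentially as written.
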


\textbf{Proof of Theorem 1.2} We only need to prove the existence of
infinitely many pairs of weak solutions. The proof of regularity and
asymptotic behavior of solutions are similarly to that of Theorem 1.1.

According to $(H_{0})$, $(H_{1})$ and $(H_{3})$, $\varphi $ is an even
functional and satisfies Cerami condition. Let $V_{k}^{+}=Z_{k}$, it is a
closed linear subspace of $X$ and $V_{k}^{+}\oplus Y_{k-1}=X$.

We may assume that there exists $x_{n}\in \Omega $ such that $\nabla
p(x_{n})\neq 0$.

Define $h_{n}\in C_{0}(\overline{B(x_{n},\varepsilon _{n})})$ as
\begin{equation*}
h_{n}(x)=\left\{
\begin{array}{cc}
0, & \left\vert x-x_{n}\right\vert \geq \varepsilon _{n} \\
\varepsilon _{n}-\left\vert x-x_{n}\right\vert , & \left\vert
x-x_{n}\right\vert <\varepsilon _{n}
\end{array}
\right. .
\end{equation*}

From Lemma 2.8, we may let $\varepsilon _{n}>0$ be small enough such that
\begin{equation*}
\varphi (th_{n})\rightarrow -\infty \text{ as }t\rightarrow +\infty .
\end{equation*}

Without loss of generality, we may assume that
\begin{equation*}
supp\,h_{i}\cap supp\, h_{j}=\varnothing \text{, }\forall i\neq j.
\end{equation*}

Set $V_{k}^{-}=span\{h_{1},\cdots ,h_{k}\}$. We will prove that there are
infinite many pairs of $V_{k}^{+}$ and $V_{k}^{-}$, such that $\varphi $
satisfies the conditions of Lemma 3.5 and the corresponding critical value $
\varpi _{k}:=\underset{g\in \Gamma }{\inf }\underset{\text{ }u\in V_{k}^{-}}{
\sup }\varphi (g(u))\rightarrow +\infty $ when $k\rightarrow +\infty $,
which implies that there are infinitely many pairs of solutions to the problem
$(P)$.

For any $k=1,2,\cdots $, we will prove that there exist $\rho _{k}>\gamma
_{k}>0$ and large enough $k$ such that
\begin{eqnarray*}
(A_{1})\text{ }b_{k} &:&=\inf \left\{ \varphi (u)\mid u\in
V_{k}^{+},\left\Vert u\right\Vert =\gamma _{k}\right\} \rightarrow +\infty
\text{ }(k\rightarrow +\infty ); \\
(A_{2})\text{ }a_{k} &:&=\max \left\{ \varphi (u)\right\vert \text{ }u\in
V_{k}^{-},\left\Vert u\right\Vert =\rho _{k}\}\leq 0.
\end{eqnarray*}

First, we show ($A_{1}$) holds. Let $\sigma \in (0,V_{0})$ be small enough, where $V_{0}$ is defined in $(V)$. By $(H_{0})$
and $(H_{2})$, there is a $C(\sigma )>0$ such that
\begin{equation*}
F(x,u)\leq \sigma \left\vert u\right\vert ^{p(x)}+C(\sigma )\left\vert
u\right\vert ^{\alpha (x)},\forall x\in\mathbb{R}
^{N},\forall u\in
\mathbb{R}.
\end{equation*}
By computation, for any $u\in Z_{k}$ with $\left\Vert u\right\Vert =\gamma
_{k}=(2C(\sigma )\alpha ^{+}\beta _{k}^{\alpha ^{+}})^{1/(p^{-}-\alpha ^{+})}
$, we have
\begin{eqnarray*}
\varphi (u) &=&\int_{\mathbb{R}^{N}}\frac{1}{p(x)}(\left\vert \nabla
u\right\vert ^{p(x)}+V(x)\left\vert u\right\vert ^{p(x)})dx-\int_{\mathbb{R}
^{N}}F(x,u)dx \\
&\geq &\frac{1}{p^{+}}\int_{\mathbb{R}^{N}}(\left\vert \nabla u\right\vert
^{p(x)}+V(x)\left\vert u\right\vert ^{p(x)})dx-C(\sigma )\int_{\mathbb{R}
^{N}}\left\vert u\right\vert ^{\alpha (x)}dx-\sigma \int_{\mathbb{R}
^{N}}\left\vert u\right\vert ^{p(x)}dx \\
&\geq &\frac{1}{2p^{+}}\left\Vert u\right\Vert ^{p^{-}}-C(\sigma )\left\vert
u\right\vert _{\alpha (\cdot )}^{\alpha (\xi )}\text{ (where }\xi \in
\mathbb{R}^{N}\text{)} \\
&\geq &\left\{
\begin{array}{l}
\frac{1}{2p^{+}}\left\Vert u\right\Vert ^{p^{-}}-C(\sigma )\text{, if }
\left\vert u\right\vert _{\alpha (\cdot )}\leq 1, \\
\frac{1}{2p^{+}}\left\Vert u\right\Vert ^{p^{-}}-C(\sigma )\beta
_{k}^{\alpha ^{+}}\left\Vert u\right\Vert ^{\alpha ^{+}}\text{, if }
\left\vert u\right\vert _{\alpha (\cdot )}>1,
\end{array}
\right.  \\
&\geq &\frac{1}{2p^{+}}\left\Vert u\right\Vert ^{p^{-}}-C(\sigma )\beta
_{k}^{\alpha ^{+}}\left\Vert u\right\Vert ^{\alpha ^{+}}-C(\sigma ) \\
&=&\frac{1}{2p^{+}}(2C(\sigma )\alpha ^{+}\beta _{k}^{\alpha
^{+}})^{p^{-}/(p^{-}-\alpha ^{+})}-C(\sigma )\beta _{k}^{\alpha
^{+}}(2C(\sigma )\alpha ^{+}\beta _{k}^{\alpha ^{+}})^{\alpha
^{+}/(p^{-}-\alpha ^{+})}-C(\sigma ) \\
&=&\frac{1}{2}(\frac{1}{p^{+}}-\frac{1}{\alpha ^{+}})(2C(\sigma )\alpha
^{+}\beta _{k}^{\alpha ^{+}})^{p^{-}/(p^{-}-\alpha ^{+})}-C(\sigma
)\rightarrow +\infty \text{ (as }k\rightarrow \infty \text{),}
\end{eqnarray*}
because $p^{+}<\alpha ^{+}$ and $\beta _{k}\rightarrow 0^{+}$ as $
k\rightarrow \infty $.
Therefore, $b_{k}\rightarrow +\infty $, (as $k\rightarrow \infty $).

Now, we show that $(A_{2})$ holds. From Lemma 2.8, it is easy to see that
\begin{equation*}
\varphi (th)\rightarrow -\infty \text{ as }t\rightarrow +\infty ,\forall
h\in V_{k}^{-}=span\{h_{1},\cdots ,h_{k}\}\text{ with}\parallel h\parallel
=1,
\end{equation*}
which implies that $(A_{2})$ holds. 

To sum up, the proof of Theorem 1.2 is completed.


\begin{thebibliography}{99}
\bibitem{e1} E. Acerbi, G. Mingione, Regularity results for a class of
functionals with nonstandard growth, \textit{Arch. Ration. Mech. Anal.,} 156
(2001), 121--140.

\bibitem{jj1} C.O. Alves, Shibo Liu, On superlinear $p(x)$-Laplacian
equations in $R^{N}$, \textit{Nonlinear Analysis}, 73 (2010), 2566-2579.

\bibitem{j8} J. Chabrowski, Y.Q. Fu, Existence of solutions for $p(x)$%
-Laplacian problems on a bounded domain, \textit{J. Math. Anal. Appl.} 306
(2005) 604--618.

\bibitem{e39} K.C. Chang, \textit{Critical point theory and applications},
Shanghai Scientific and Technology press, Shanghai, 1986.

\bibitem{e2} Y. Chen, S. Levine, and M. Rao, Variable exponent, linear
growth functionals in image restoration, \textit{SIAM J. Appl. Math.},
\textbf{66 }(2006), No.4, 1383--1406.

\bibitem{j5} L. Diening, P. Harjulehto, P. H\"{a}st\"{o} and M. R\r{u}\v{z}i%
\v{c}ka, \textit{Lebesgue and Sobolev spaces with variable exponents},
Lecture Notes in Mathematics, vol. 2017, Springer-Verlag, Berlin, 2011.

\bibitem{e9} X.L. Fan, D. Zhao, On the spaces $L^{p(x)}\left( \Omega \right)
$ and $W^{m,p(x)}\left( \Omega \right) $, \textit{J. Math. Anal. Appl.,
\textbf{263 }}(2001)\textit{, }424-446\textit{.}

\bibitem{e10} X.L. Fan, Global $C^{1,\alpha }$ regularity for variable
exponent elliptic equations in divergence form, \textit{J. Differential
Equations,} \textbf{235} (2007), 397-417.

\bibitem{n19} X.L. Fan, D. Zhao, The quasi-minimizer of integral functionals
with $m(x)$ growth conditions, \textit{Nonlinear Anal.} 39 (2000), 807-816.

\bibitem{e11} X.L. Fan, On the sub-supersolution method for $p(x)$-Laplacian
equations, \textit{J. Math. Anal. Appl.} \textbf{330} (2007), 665--682.

\bibitem{e12} X.L. Fan, Q.H. Zhang, Existence of solutions for $p(x)$
-Laplacian Dirichlet problem,\textbf{\ }\textit{Nonlinear Anal. }\textbf{52 }
(2003), 1843-1852.

\bibitem{e14} X.L. Fan, Solutions for $p(x)$-Laplacian Dirichlet problems
with singular coefficients,\textit{\ J. Math. Anal. Appl.} \textbf{312}
(2005), 464-477.

\bibitem{e15} X.L. Fan, J.S. Shen, D. Zhao, Sobolev embedding theorems for
spaces $W^{k,p(x)}(\Omega )$, \textit{J. Math. Anal. Appl. }\textbf{262}
(2001), 749--760.

\bibitem{zj6} X.L. Fan, $p(x)$-Laplacian equations in $
\mathbb{R}^{N}$ with periodic data and nonperiodic perturbations, \textit{J. Math.
Anal. Appl.}, 341 (2008) 103-119.

\bibitem{e16} Y.Q. Fu, The principle of concentration compactness in $
L^{p(x)}$ spaces and its application, \textit{Nonlinear Analysis}, \textbf{71
} (2009), No.5-6, 1876-1892.

\bibitem{zj1} L. Gasi\'{n}ski, N.S. Papageorgiou, A pair of positive
solutions for the Dirichlet $p(z)$-Laplacian with concave and convex
nonlinearities, \textit{J. Glob. Optim.} 56 (2013), 1347-1360.

\bibitem{zj4} B. Ge, Q.M. Zhou, L. Zu, Positive solutions for nonlinear
elliptic problems of $p$-Laplacian type on $
\mathbb{R}^{N}$ without (AR) condition, \textit{Nonlinear Anal Real World Appl}, 21
(2015) 99-109.

\bibitem{e17} A. El Hamidi, Existence results to elliptic systems with
nonstandard growth conditions, \textit{J. Math. Anal. Appl.} \textbf{300}
(2004), 30-42.

\bibitem{e20} O. Kov\'{a}\v{c}ik and J. R\'{a}kosn\'{\i}k, On spaces $
L^{p(x)}\left( \Omega \right) $ and $W^{k,p(x)}\left( \Omega \right) $,
\textit{Czechoslovak Math. J.}, \textbf{41\ }(1991), 592-618.

\bibitem{zj5} N. Lam, G.Z. Lu, Elliptic Equations and Systems with
Subcritical and Critical Exponential Growth Without the
Ambrosetti-Rabinowitz Condition, \textit{J. Geom. Anal.}, 24(2014), 118-143.

\bibitem{jj5} G.B. Li, C.Y. Yang, The existence of a nontrivial solution to
a nonlinear elliptic boundary value problem of $p$-Laplacian type without
the Ambrosetti Rabinowitz condition, \textit{Nonlinear Analysis}, 72 (2010),
4602-4613.

\bibitem{jj3} S.B. Liu, On superlinear problems without the Ambrosetti and
Rabinowitz condition, \textit{Nonlinear Analysis}, 73 (2010), 788-795.

\bibitem{j1} M. Mih\u{a}ilescu and V. R\u{a}dulescu, On a nonhomogeneous
quasilinear eigenvalue problem in Sobolev spaces with variable exponent,
\textit{Proc. Amer. Math. Soc.} 135 (2007), No. 9, 2929-2937.

\bibitem{jj2} O.H. Miyagaki, M.A.S. Souto, Superlinear problems without
Ambrosetti and Rabinowitz growth condition, \textit{J. Differential Equations
}, 245 (2008), 3628-3638.

\bibitem{zj2} P. Pucci, Q.H. Zhang, Existence of entire solutions for a
class of variable exponent elliptic equations, \textit{J. Differential
Equations}, 257 (2014) 1529-1566.

\bibitem{e3} M. R\r{u}\v{z}i\v{c}ka, \textit{Electrorheological fluids:
modeling and mathematical theory}, Lecture Notes in Math 1748,
Springer-Verlag, Berlin, 2000.

\bibitem{rr} V. Radulescu, D. Repovs,
\textit{Partial Differential Equations with Variable Exponents: Variational Methods and Qualitative Analysis}, Chapman and Hall/CRC, Monographs and Research Notes in Mathematics, 2015.

\bibitem{sa} J.J. Sakurai,
\textit{Morden Quantum Mechanics}, Revised Version, Addison Wesley, 1993.

\bibitem{e26} S.G. Samko, Densness of $C_{0}^{\infty }(\mathbb{R}^{N})$ in the generalized Sobolev spaces $W^{m,p(x)}(\mathbb{R}^{N}),$ \textit{Dokl. Ross. Akad. Nauk,} \textbf{369 }(1999), No.4, 451-454.

\bibitem{wyl} X. Wang, J. Yao, D. Liu, High energy solutions to $p(x)$-Laplace equations of Schr\"{o}dinger type, Electron. J. Diff. Equ., 136 (2015) 1-17.

\bibitem{wy} X. Wang, J. Yao, Compact embeddings between variable exponent spaces with unbounded underlying
domain, Nonlinear Analysis: TMA, 70 (2009), 3472-3482. 



\bibitem{jj4} M. Willem, W. Zou, On a Schr\"{o}inger equation with periodic
potential and spectrum point zero, \textit{Indiana Univ. Math. J.,} 52
(2003), 109-132.

\bibitem{yw} J. Yao, X. Wang, On an open problem involving the $p(x)$-Laplacian, Nonlinear Analysis: TMA, 69
(2008), 1445-1453. 

\bibitem{yao} J. Yao, Solutions for Neumann boundary value problems involving $p(x)$-Laplace operators,
Nonlinear Analysis: TMA, 68 (2008), 1271-1283.


\bibitem{31a} A.B. Zang, $p(x)$-Laplacian equations satisfying Cerami
condition, \textit{J. Math. Anal. Appl.} 337 (2008) 547-555.

\bibitem{e29} Q.H. Zhang, Existence of radial solutions for $p(x)$-Laplacian
equations in $\mathbb{R}^{N}$, \textbf{\ }\textit{J. Math. Anal. Appl.} \textbf{315 }(2006), No.2,
506-516.

\bibitem{e30} Q.H. Zhang, Existence and asymptotic behavior of positive
solutions for variable exponent elliptic systems, \textit{Nonlinear
Analysis, }\textbf{70} (2009), No.1, 305-316.

\bibitem{e31} Q.H. Zhang, A strong maximum principle for differential
equations with nonstandard $p(x)$-growth conditions, \textit{J. Math. Anal.
Appl., }\textbf{312 }(2005), No.1, 24-32.

\bibitem{e32} Q.H. Zhang, X.P. Liu, Z.M. Qiu, On the boundary blow-up
solutions of $p(x)$-Laplacian equations with singular coefficient, \textit{
Nonlinear Analysis, }\textbf{70} (2009), No.11, 4053-4070.

\bibitem{zj3} Q.H. Zhang, C.S. Zhao, Existence of strong solutions of a $
p(x) $-Laplacian Dirichlet problem without the Ambrosetti--Rabinowitz
condition, \textit{Comput. Math. Appl.}, 69 (2015), 1-12.

\bibitem{22} J.F. Zhao, Structure Theory of Banach Spaces,Wuhan University
Press,Wuhan, 1991 (in Chinese).

\bibitem{e4} V.V. Zhikov, Averaging of functionals of the calculus of
variations and elasticity theory, \textit{Math. USSR. Izv}. \textbf{29 }
(1987), 33-36.

\bibitem{20a} C.K. Zhong, X.L. Fan \& W.Y. Chen,\textit{\ Introduction to
Nonlinear Functional Analysis, }Lanzhou University Press, Lanzhou, 1998.
\end{thebibliography}
\end{document}